\documentclass{amsart}
\usepackage{amsmath, amssymb, mathrsfs}
\usepackage{xcolor}

\definecolor{DarkBlue}{rgb}{0,0.2,0.6}
\definecolor{PinkPurple}{rgb}{0.8,0.3,0.3}

\usepackage[
  pdfauthor={Mehdi Ghasemi and Salma Kuhlmann},
  pdftitle={The Cone Generated by Positive Semidefinite Mean Polynomials},
  pdfsubject={Real Algebraic Geometry, Polynomial Optimization},
  pdfkeywords={mean polynomials, power means, sums of squares, nonnegative circuit polynomials, Positivstellensatz, polynomial optimization},
  linkcolor=DarkBlue,
  citecolor=PinkPurple,
  colorlinks=true]{hyperref}

\newtheorem{thm}{Theorem}[section]
\newtheorem{lemma}[thm]{Lemma}
\newtheorem{prop}[thm]{Proposition}
\newtheorem{crl}[thm]{Corollary}

\theoremstyle{definition}
\newtheorem{dfn}[thm]{Definition}

\newtheorem{rem}[thm]{Remark}

\numberwithin{equation}{section}

\newcommand{\reals}{\mathbb{R}}
\newcommand{\naturals}{\mathbb{N}}

\title[PSD Mean Polynomials]{The Cone Generated by Positive Semidefinite Mean Polynomials}
\date{}

\author{Mehdi Ghasemi}
\address{Department of Mathematics and Statistics, University of Saskatchewan,
  Saskatoon, SK, S7N~5E6, Canada}
\email{mehdi.ghasemi@gmail.com}

\author{Salma Kuhlmann}
\address{Department of Mathematics and Statistics, University of Konstanz,
  78457~Konstanz, Germany}
\email{salma.kuhlmann@uni-konstanz.de}

\keywords{mean polynomial, power mean, sum of squares, nonnegative circuit polynomial,
  Positivstellensatz, polynomial optimization}
\subjclass[2020]{Primary 14P99, 90C23; Secondary 13J30, 90C22}

\begin{document}

\begin{abstract}
We study the cone $\mathcal{M}_{n,2d}$ of nonnegative mean polynomials---real
$n$-variate forms of degree $2d$ that can be expressed as weighted power
means $M_{q,p}(Y,w)$ with $q>p$.  This cone simultaneously generalises the
cone of sums of squares $\Sigma_{n,2d}$ and the cone of sums of nonnegative
circuit polynomials $\mathcal{C}_{n,2d}$.  We prove that every square of an
arbitrary polynomial belongs to the mean polynomial preprime
$T_{\mathrm{mean}}$, that $T_{\mathrm{mean}}$ is strongly generating, and
consequently that every polynomial strictly positive on a compact
semialgebraic set admits a representation with mean polynomial certificates.
We exhibit the Robinson form $\hat{R}$ as a separating example that lies in
$\mathcal{M}_{4,4}$ but outside $\mathrm{SOSONC}_{4,4}$.  Finally, we
outline a convergent hierarchy of lower bounds for polynomial optimization
based on the mean polynomial cone and discuss tractable depth-truncated
approximations via signomial programming.
\end{abstract}

\maketitle

\section{introduction}\label{sec:intro}

Let $f$ be a real polynomial in $n$ variables.  Certifying that $f$ is
nonnegative on $\reals^n$ (or on a semialgebraic subset thereof) is a
fundamental task in real algebraic geometry and polynomial optimization.
Three families of certificates have proved especially fruitful:

\begin{itemize}
  \item Sums of squares (SOS): $f = \sum h_i^2$, computable via semidefinite
  programming.
  \item Diagonal-minus-Tail (D-T) forms and their generalisation, sums of
  nonnegative circuit polynomials (SONC), obtained from the weighted
  arithmetic--geometric mean inequality.
  \item The Minkowski sum $\mathrm{SOSONC} = \Sigma + \mathcal{C}$, studied
  by Dressler, Kuhlmann, and Schick~\cite{DresslerSchick2025}.
\end{itemize}

In this paper we unify the SOS and SONC pictures through the language of
\emph{weighted power means}.  For variables $Y=(Y_1,\dots,Y_m)$, positive
weights $w=(w_1,\dots,w_m)$, and orders $q>p\ge0$ we form
\[
M_{q,p}(Y,w)=M_q(Y,w)^{\operatorname{lcm}(q,p)}-M_p(Y,w)^{\operatorname{lcm}(q,p)},
\qquad
M_p(Y,w)=\Bigl(\frac{\sum w_iY_i^p}{\sum w_i}\Bigr)^{\!1/p}.
\]
Jensen's inequality gives $M_q\ge M_p$, so $M_{q,p}$ is nonnegative.
The \emph{cone of nonnegative mean polynomials} $\mathcal{M}_{n,2d}$ is the
set of all finite sums of such forms (with arbitrary polynomial arguments
$L_i$) of degree $2d$.

Our main results are:

\begin{enumerate}
  \item The mean polynomial preprime $T_{\mathrm{mean}}$ is strongly
  generating, because every square $(p)^2$ is a mean form $M_{2,1}$
  (Lemma~\ref{lem:mean-squares}).  Hence Marshall's representation
  theorem~\cite{Marshall02} yields a Positivstellensatz with mean polynomial
  certificates (Theorem~\ref{thm:positiv}).

  \item The Robinson form $\hat{R}$ belongs to $\mathcal{M}_{4,4}$ but not
  to $\mathrm{SOSONC}_{4,4}$, so the containment is strict
  (Corollary~\ref{crl:separation}).

  \item The consecutive-order mean difference $\Phi_{q,q-1}$ is an SOS
  polynomial for all even $q\ge2$ and all $n\ge2$; the proof is fully
  constructive and does not rely on numerical SDP decompositions
  (Theorem~\ref{thm:consecutive-sos}).

  \item The mean polynomial Positivstellensatz gives a convergent hierarchy
  of lower bounds for polynomial optimization.  At product depth~$1$ the
  hierarchy is a signomial program, generalising the SONC hierarchy.
\end{enumerate}

The paper is organised as follows.  Section~\ref{sec:prelim} collects
definitions and the basic monotonicity inequality.  Section~\ref{sec:dt-circuit}
re-derives the D-T and circuit nonnegativity criteria in the mean polynomial
language.  Section~\ref{sec:M2dp-sos} studies the forms $M_{2d,p}$ and
their connection to sums of squares, culminating in the constructive SOS
certificate for consecutive-order mean differences.
Section~\ref{sec:beyond} exhibits the Choi--Lam and Robinson forms as mean
polynomials and uses $\hat{R}$ to separate $\mathcal{M}_{n,2d}$ from
$\mathrm{SOSONC}_{n,2d}$.  Section~\ref{sec:positiv} develops the
Positivstellensatz, and Section~\ref{sec:optim} outlines the optimization
hierarchy and computational aspects.

\section{preliminaries on mean polynomials}\label{sec:prelim}

Let $Y=(Y_1,\dots,Y_m)$ be an $m$-tuple of symbols and
$w=(w_1,\dots,w_m)\in\reals_{>0}^m$ a vector of positive weights.
For $p\neq0$, the power mean of order~$p$ is
\[
M_p(Y,w)=\Bigl(\frac{\sum_{i=1}^m w_i Y_i^{\,p}}{\sum_{i=1}^m w_i}\Bigr)^{\!1/p}.
\]

\begin{prop}[Monotonicity]\label{prop:monotonicity}
If $p<q$, then $M_p(Y,w)\le M_q(Y,w)$ for every $Y\in\reals_{\ge0}^m$.
\end{prop}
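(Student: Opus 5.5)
We prove the power mean inequality via Jensen's inequality applied to a suitable convex power function. Normalise the weights: set $\lambda_i = w_i/\sum_j w_j$, so $\lambda_i>0$ and $\sum\lambda_i=1$, and $M_p(Y,w)=\bigl(\sum\lambda_i Y_i^p\bigr)^{1/p}$.

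\emph{Reduction to positive entries.} By continuity in $Y$ it suffices to treat $Y\in\reals_{>0}^m$. This reduction is justified for $p,q>0$, where both sides are continuous on $\reals_{\ge0}^m$. For negative orders with some $Y_i=0$, we adopt the usual convention $M_p=0$ for $p<0$; the inequality is then trivial whenever $p<0$. We also use the standard convention that $M_0$ is the weighted geometric mean $\prod Y_i^{\lambda_i}$, which is the limit of $M_p$ as $p\to0$. The statement then covers all real $p<q$.

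\emph{Main case, $0<p<q$.} Put $r=q/p>1$ and $x_i=Y_i^p>0$. The function $\varphi(t)=t^r$ is convex on $(0,\infty)$, so Jensen gives $\bigl(\sum\lambda_i x_i\bigr)^r\le\sum\lambda_i x_i^r=\sum\lambda_i Y_i^q$. Taking the $1/q$-th power, which is an increasing map, yields $\bigl(\sum\lambda_iY_i^p\bigr)^{1/p}\le\bigl(\sum\lambda_iY_i^q\bigr)^{1/q}$, that is, $M_p\le M_q$.

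\emph{Remaining cases.}
\begin{itemize}
\item If $p<q<0$, apply the previous case to $Y_i^{-1}$ with orders $-q<-p$. Using $M_{p}(Y)=M_{-p}(Y^{-1})^{-1}$, the inequality reverses under inversion and gives the claim.
\item If $p<0<q$, we chain through $M_0$. The inequality $M_0\le M_q$ is weighted AM--GM applied to $Y_i^q$, which follows from concavity of $\log$ via Jensen. Then $M_p\le M_0$ follows from the same inequality applied to $Y^{-1}$.
\item If one of $p,q$ equals $0$, this is the same AM--GM step.
\end{itemize}

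\emph{Main obstacle.} The only delicate point is the bookkeeping of sign changes. Raising to the power $1/p$ is decreasing when $p<0$, and the convexity direction of $t^r$ flips when $r<1$ or $r<0$. The inversion trick handles all of this uniformly, so no separate convexity analysis is needed. Since the paper only uses $q>p\ge0$, the essential content is the $0<p<q$ Jensen step together with AM--GM for $p=0$.
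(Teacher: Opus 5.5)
Your proof is correct, and its main case $0<p<q$ is exactly the paper's argument: Jensen for $\varphi(t)=t^{q/p}$ at $x_i=Y_i^p$, followed by a $q$-th root. The paper applies this one computation to every $p<q$, which is literally valid only when $q>0$ and $p\neq0$. For $p<q<0$ the function $t^{q/p}$ is concave and only the order-reversing $q$-th root restores the inequality, and $p=0$, which is needed later for $M_{1,0}$ and $M_{2d,0}$, is not covered at all. Your inversion identity $M_p(Y)=M_{-p}(Y^{-1})^{-1}$, the AM--GM step for order $0$, and the zero-entry conventions supply exactly the cases the paper's proof glosses over.
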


\begin{proof}
Let $\varphi(t)=t^{q/p}$.  Since $q>p$, we have $\varphi''(t)>0$ on
$\reals_{>0}$; hence $\varphi$ is convex.  By Jensen's inequality,
\[
\varphi\Bigl(\frac{\sum w_i x_i}{\sum w_i}\Bigr)
\le\frac{\sum w_i\varphi(x_i)}{\sum w_i}.
\]
Substituting $x_i=Y_i^{\,p}$ and $\varphi(t)=t^{q/p}$ gives
\[
\Bigl(\frac{\sum w_i Y_i^{\,p}}{\sum w_i}\Bigr)^{\!q/p}
\le\frac{\sum w_i Y_i^{\,q}}{\sum w_i},
\]
which is exactly $M_p^{\,q}\le M_q^{\,q}$.  Taking $q$-th roots yields the
claim.
\end{proof}

\medskip
Raising both sides of $M_p\le M_q$ to the power
$c=\operatorname{lcm}(q,p)$ eliminates rational exponents and yields a
polynomial.  We define the \emph{mean polynomial form}
\begin{equation}\label{eq:mean-poly}
\mathcal{M}_{q,p}(Y,w)=M_q(Y,w)^c-M_p(Y,w)^c.
\end{equation}
By Proposition~\ref{prop:monotonicity}, $\mathcal{M}_{q,p}$ is nonnegative
on $\reals_{\ge0}^m$ whenever $q>p$.  In the sequel we write
$M_{q,p}$ for $\mathcal{M}_{q,p}$.

Three limiting cases of the power mean will be useful
(see~\cite[Chapter~II]{HLP52}):
\begin{align*}
\lim_{p\to-\infty}M_p(x,w)&=\min(x_1,\dots,x_n),\\
\lim_{p\to\infty}M_p(x,w)&=\max(x_1,\dots,x_n),\\
\lim_{p\to0}M_p(x,w)&=\Bigl(\prod_{i=1}^n x_i^{w_i}\Bigr)^{\!1/\!\sum w_i}.
\end{align*}
Equality $M_p(x,w)=M_q(x,w)$ holds iff $x_1=\cdots=x_n$.

\section{nonnegative diagonal minus tail and circuit polynomials}
\label{sec:dt-circuit}

\subsection{The D-T criterion}

A Diagonal-minus-Tail (D-T) form has the shape
\[
f(x)=\sum_{i=1}^n\beta_i x_i^{2d}-\mu x^\alpha,
\qquad
\beta_i,\mu>0,\;\;|\alpha|=2d.
\]

\begin{thm}[D-T nonnegativity]\label{thm:dt-psd}
$f$ is nonnegative on $\reals_{\ge0}^n$ iff
\[
\mu^{2d}\prod_{i=1}^n\alpha_i^{\alpha_i}
\le(2d)^{2d}\prod_{i=1}^n\beta_i^{\alpha_i}.
\]
\end{thm}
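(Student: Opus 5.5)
The plan is to reduce both directions to the weighted AM--GM inequality, which is the limiting case $p\to0$ of Proposition~\ref{prop:monotonicity}: for weights $\lambda_i=\alpha_i/(2d)$ (so $\sum\lambda_i=1$) we have $M_0\le M_1$, i.e. $\prod z_i^{\lambda_i}\le\sum\lambda_i z_i$ for $z_i\ge0$, with equality iff all $z_i$ with $\lambda_i>0$ coincide. Indices with $\alpha_i=0$ contribute nothing to $x^\alpha$ and only add the nonnegative terms $\beta_ix_i^{2d}$, and they drop out of both sides of the criterion (using $0^0=1$). So I will first restrict to $S=\{i:\alpha_i>0\}$.

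\textbf{Sufficiency.} Write $x^\alpha=\prod_{i\in S}(x_i^{2d})^{\lambda_i}$ and set $z_i=\beta_i x_i^{2d}/\lambda_i$ for $i\in S$. AM--GM gives
\[
\prod_{i\in S}\Bigl(\frac{\beta_i}{\lambda_i}\Bigr)^{\lambda_i}x^\alpha\le\sum_{i\in S}\beta_i x_i^{2d}.
\]
Hence $f\ge\bigl(\Theta-\mu\bigr)x^\alpha$ with $\Theta=\prod_{i\in S}(\beta_i/\lambda_i)^{\lambda_i}$, and $f\ge0$ on $\reals_{\ge0}^n$ as soon as $\mu\le\Theta$. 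Raising $\mu\le\Theta$ to the power $2d$ and substituting $\lambda_i=\alpha_i/(2d)$ turns it into $\mu^{2d}\prod\alpha_i^{\alpha_i}\le(2d)^{2d}\prod\beta_i^{\alpha_i}$; this uses $\sum\alpha_i=2d$ to collect the factors of $2d$. That is exactly the stated inequality.

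\textbf{Necessity.} I will evaluate $f$ at the point where AM--GM is tight, namely $z_i$ constant. Concretely, take $x_i=(\lambda_i/\beta_i)^{1/(2d)}$ for $i\in S$ and $x_i=0$ otherwise. Then $\sum_{i\in S}\beta_ix_i^{2d}=\sum\lambda_i=1$ and $x^\alpha=\prod(\lambda_i/\beta_i)^{\lambda_i}=1/\Theta$. So $f(x)=1-\mu/\Theta$, and $f(x)\ge0$ forces $\mu\le\Theta$, which is equivalent to the criterion as above.

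The only delicate point is the bookkeeping when some $\alpha_i=0$, together with the exponent manipulation. Since $\mu$ and $\Theta$ are positive, raising to the power $2d$ preserves the inequality in both directions. I would also note that $M_0$ is only stated as a limit in the paper, so I would either cite weighted AM--GM directly or obtain it from concavity of $\log$ via Jensen, in the same way as the proof of Proposition~\ref{prop:monotonicity}.
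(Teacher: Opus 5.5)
Your proof is correct and follows essentially the same route as the paper. Sufficiency applies weighted AM--GM to the terms $\beta_i x_i^{2d}$ with weights proportional to $\alpha_i$, and necessity evaluates $f$ at the AM--GM equality point; your test point is the paper's $x_i^*=(\alpha_i/\beta_i)^{1/2d}$ rescaled by $(2d)^{-1/2d}$, which is harmless by homogeneity, and your explicit restriction to $S=\{i:\alpha_i>0\}$ is a welcome bit of care that the paper's proof, which divides by $\alpha_i$, glosses over.
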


\begin{proof}
Apply the weighted AM--GM inequality to
$y_i=\frac{\beta_i}{\alpha_i}x_i^{2d}$ with weights $w_i=\alpha_i$:
\[
\frac{1}{2d}\sum\beta_i x_i^{2d}
\ge\Bigl(\prod_{i=1}^n\Bigl(\frac{\beta_i}{\alpha_i}x_i^{2d}\Bigr)^{\!\alpha_i}\Bigr)^{\!1/2d}.
\]
Rearranging gives $\sum\beta_i x_i^{2d}\ge\mu x^\alpha$ whenever the stated
condition holds.  Conversely, if the condition fails, the test point
$x_i^*=(\alpha_i/\beta_i)^{1/2d}$ makes $f(x^*)<0$, contradicting
nonnegativity.
\end{proof}

\subsection{Circuit polynomials}

Let $\alpha(0),\dots,\alpha(r)\in\naturals^n$ be exponent vectors and
$\beta=\sum_{j=0}^r\lambda_j\alpha(j)$ a convex combination with
$\lambda_j>0$, $\sum\lambda_j=1$.  A circuit polynomial has the form
\[
f(X)=\sum_{j=0}^r f_{\alpha(j)}X^{\alpha(j)}-f_\beta X^\beta,
\qquad f_{\alpha(j)}>0.
\]

\begin{thm}[Circuit nonnegativity]\label{thm:circuit-psd}
$f$ is nonnegative on $\reals_{\ge0}^n$ iff
\[
|f_\beta|\le\prod_{j=0}^r\Bigl(\frac{f_{\alpha(j)}}{\lambda_j}\Bigr)^{\!\lambda_j}.
\]
\end{thm}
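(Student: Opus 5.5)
Both directions rest on the weighted AM--GM inequality, i.e.\ Proposition~\ref{prop:monotonicity} in the limiting case $p\to0$ with $q=1$, as in the proof of Theorem~\ref{thm:dt-psd}. We argue on $X\in\reals_{>0}^n$ and extend to the boundary of $\reals_{\ge0}^n$ by continuity. If $f_\beta\le0$, then $f\ge0$ trivially, since every term is nonnegative. The condition involves $|f_\beta|$, so the substantive case is $f_\beta>0$.

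\emph{Sufficiency.} Put $y_j=\frac{f_{\alpha(j)}}{\lambda_j}X^{\alpha(j)}$ and use weights $\lambda_j$, which sum to one. Weighted AM--GM gives
\[
\sum_{j=0}^r f_{\alpha(j)}X^{\alpha(j)}=\sum_j\lambda_j y_j\ge\prod_j y_j^{\lambda_j}
=\prod_j\Bigl(\frac{f_{\alpha(j)}}{\lambda_j}\Bigr)^{\lambda_j}X^{\sum_j\lambda_j\alpha(j)}.
\]
The exponent $\sum_j\lambda_j\alpha(j)$ equals $\beta$. Hence the right-hand side is $\Theta X^\beta$, where $\Theta=\prod_j(f_{\alpha(j)}/\lambda_j)^{\lambda_j}$ is the circuit number. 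The hypothesis $|f_\beta|\le\Theta$ then yields $f(X)\ge(\Theta-f_\beta)X^\beta\ge0$.

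\emph{Necessity.} This is the step needing care: we must exhibit a point where equality holds in AM--GM. Equality requires all $y_j$ to be equal. Pass to logarithmic coordinates $X=e^{u}$ with $u\in\reals^n$. The condition becomes $\log(f_{\alpha(j)}/\lambda_j)+\langle u,\alpha(j)\rangle=s$ for all $j$, with $s\in\reals$ free. Subtracting the $j=0$ equation leaves the linear system $\langle u,\alpha(j)-\alpha(0)\rangle=c_j$ for $j=1,\dots,r$, where $c_j=\log\frac{f_{\alpha(0)}\lambda_j}{\lambda_0 f_{\alpha(j)}}$.

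Solvability comes from the circuit structure. The $\alpha(j)$ are affinely independent, being the vertices of a simplex containing $\beta$ in the relative interior of its convex hull. Therefore the vectors $\alpha(j)-\alpha(0)$ are linearly independent, and the system has a solution $u^*$.

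At $X^*=e^{u^*}$ all $y_j$ share a common value $y$. Then $\sum_j f_{\alpha(j)}X^{*\alpha(j)}=y=\Theta X^{*\beta}$, so $f(X^*)=(\Theta-f_\beta)X^{*\beta}$. If $f_\beta>\Theta$, this is negative, which contradicts nonnegativity. So $f\ge0$ on $\reals_{\ge0}^n$ forces $f_\beta\le\Theta$, and combined with the trivial case $f_\beta\le0$ we get $|f_\beta|\le\Theta$ exactly when $f_\beta>0$.

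One caveat: if $f_\beta<0$ and $|f_\beta|>\Theta$, then $f$ is still nonnegative on $\reals_{\ge0}^n$. So the ``only if'' direction holds as stated only when $f_\beta>0$, which is the intended reading given the minus sign in front of $f_\beta X^\beta$. I would state this explicitly. The main obstacle is the affine-independence argument for the existence of $X^*$.
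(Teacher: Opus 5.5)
Your proposal is correct and follows the paper's route: weighted AM--GM with $Y_j=\frac{f_{\alpha(j)}}{\lambda_j}X^{\alpha(j)}$ gives sufficiency, and necessity comes from a point $X^*\in\reals_{>0}^n$ solving the log-linear system that makes all $Y_j$ equal (your $s$ plays the role of the paper's $\log c$). You also supply two things the paper's proof leaves out. First, you justify solvability of that system through affine independence of the $\alpha(j)$. Second, your caveat is right: on $\reals_{\ge0}^n$ the ``only if'' direction fails when $f_\beta<-\Theta$, so the condition should be read with $f_\beta>0$ (equivalently as $f_\beta\le\Theta$), a point the paper overlooks.
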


\begin{proof}
Set $Y_j=\frac{f_{\alpha(j)}}{\lambda_j}X^{\alpha(j)}$.  The weighted AM--GM
inequality yields
\[
\sum f_{\alpha(j)}X^{\alpha(j)}
\ge\Bigl(\prod\Bigl(\frac{f_{\alpha(j)}}{\lambda_j}\Bigr)^{\!\lambda_j}\Bigr)
X^\beta,
\]
so the condition is sufficient.  For necessity, choose
$X^*\in\reals_{>0}^n$ solving the log-linear system
$\langle\alpha(j),\log X^*\rangle=\log(c\lambda_j/f_{\alpha(j)})$; at this
point AM--GM is an equality, and violation of the condition gives
$f(X^*)<0$.
\end{proof}

\begin{dfn}[SONC cone]\label{dfn:sonc}
A polynomial satisfying the condition of Theorem~\ref{thm:circuit-psd} is a
\emph{nonnegative circuit polynomial}.  The SONC cone
$\mathcal{C}_{n,2d}$ is the conic hull of all nonnegative circuit
polynomials of degree $2d$ in $n$ variables.
\end{dfn}

\begin{rem}\label{rem:sonc-means}
From the proof of Theorem~\ref{thm:circuit-psd}, every nonnegative circuit
polynomial is a mean polynomial of the form $M_{1,0}(Y,w)$ with
$Y_j=\frac{f_{\alpha(j)}}{\lambda_j}X^{\alpha(j)}$ and $w_j=\lambda_j$.
\end{rem}

\begin{dfn}[SOSONC cone]\label{dfn:sosonc}
$\mathrm{SOSONC}_{n,2d}=(\Sigma+\mathcal{C})_{n,2d}$, the Minkowski sum of
the SOS and SONC cones~\cite{DresslerSchick2025}.
\end{dfn}

\subsection{D-T forms as a special case}

Every D-T form is a circuit polynomial: map the diagonal terms to
$\alpha(i)=2d\,e_i$, $f_{\alpha(i)}=\beta_i$, and the tail exponent to
$\beta=\alpha$ with weights $\lambda_i=\alpha_i/2d$.  Applying
Theorem~\ref{thm:circuit-psd} and raising both sides to the power $2d$
recovers the condition of Theorem~\ref{thm:dt-psd}.  The converse is false:
circuit polynomials allow an arbitrary number of diagonal terms whose
exponents form a simplex, a strictly larger family.

\section{the forms $M_{2d,p}$ and connection to sums of squares}
\label{sec:M2dp-sos}

We now specialise to the forms $M_{2d,p}(X,\alpha)$ where
$\alpha\in\naturals_0^n$ satisfies $|\alpha|=2d$.  Here the power mean acts
on the monomials $X_i$ with weights $\alpha_i$.

\subsection{The geometric mean case}

\begin{prop}\label{prop:sobs}
$M_{2d,0}(X,\alpha)$ is a PSD D-T form and a sum of binomial squares (SOBS)
for every $\alpha$ with $|\alpha|=2d$.
\end{prop}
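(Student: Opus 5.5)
The plan is to compute $M_{2d,0}(X,\alpha)$ explicitly, check the D-T condition of Theorem~\ref{thm:dt-psd} directly, and then obtain the binomial-square decomposition by reducing to the classical symmetric AM--GM form of Hurwitz. With weights $\alpha_i$ and $\sum\alpha_i=2d$, the power means are $M_{2d}(X,\alpha)=\bigl(\tfrac{1}{2d}\sum_i\alpha_iX_i^{2d}\bigr)^{1/2d}$ and $M_0(X,\alpha)=\bigl(\prod_iX_i^{\alpha_i}\bigr)^{1/2d}$. We clear exponents with $c=2d$, reading $\operatorname{lcm}(2d,0)$ as $2d$. This gives
\[
M_{2d,0}(X,\alpha)=\sum_{i=1}^n\frac{\alpha_i}{2d}X_i^{2d}-X^\alpha ,
\]
which is a D-T form with $\beta_i=\alpha_i/2d$ and $\mu=1$. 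For indices with $\alpha_i=0$ the diagonal term simply drops out, and we restrict to the support of $\alpha$.

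\emph{Step 1 (PSD).} Substituting into Theorem~\ref{thm:dt-psd}, the right-hand side is $(2d)^{2d}\prod_i(\alpha_i/2d)^{\alpha_i}=\prod_i\alpha_i^{\alpha_i}$, since $\sum\alpha_i=2d$. The left-hand side is the same quantity. So the condition holds with equality, and the form is nonnegative on $\reals_{\ge0}^n$. To pass to all of $\reals^n$, note that the diagonal part is even in each variable and $X^\alpha\le|X|^\alpha$. Hence $f(x)\ge f(|x|)\ge0$.

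\emph{Step 2 (reduction to the symmetric form).} List the variables with multiplicity, $z_1,\dots,z_{2d}$, where $X_i$ appears $\alpha_i$ times, so that $X^\alpha=\prod_j z_j$ and $\tfrac1{2d}\sum_jz_j^{2d}=\sum_i\tfrac{\alpha_i}{2d}X_i^{2d}$. It therefore suffices to show that
\[
H_{2d}(z)=\frac{1}{2d}\sum_{j=1}^{2d}z_j^{2d}-z_1\cdots z_{2d}
\]
is a sum of squares of binomials, with nonnegative coefficients, in the $z_j$. Under the substitution $z_j\mapsto X_i$, a binomial $(az^\gamma-bz^\delta)^2$ becomes a binomial square in $X$, possibly collapsing to a monomial square. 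So the SOBS property transfers to $M_{2d,0}(X,\alpha)$.

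\emph{Step 3 (Hurwitz's identity), the main obstacle.} When $2d=2^k$ there is a clean induction. Split the $2^k$ variables into two halves with products $P$ and $Q$. Apply the case $2^{k-1}$ to the squares $v_j=z_j^2$ in each half, which gives $\tfrac1{2^{k-1}}\sum_{\text{half}}z_j^{2^k}-P^2\in\mathrm{SOBS}$. Then finish with $\tfrac12(P^2+Q^2)-PQ=\tfrac12(P-Q)^2$.

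For general $2d$ padding does not stay polynomial, so I would use Hurwitz's 1891 symmetrization argument instead. Let $\mathcal{P}$ be the symmetrization operator averaging over all permutations of $z_1,\dots,z_{2d}$. Then $H_{2d}$ is a positive multiple of a telescoping sum of terms
\[
\mathcal{P}\bigl[(z_1^{m}-z_2^{m})(z_1-z_2)\,z_3\cdots\bigr],
\]
with monomial cofactors of total degree $2d-m-1$. Each bracket $(z_1^m-z_2^m)(z_1-z_2)$ equals $(z_1-z_2)^2\sum_{r}z_1^{r}z_2^{m-1-r}$. The main difficulty is rewriting these terms, which contain non-square monomial cofactors, as nonnegative combinations of binomial squares after symmetrization. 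I would first verify the identity on the $2d=4$ and $2d=6$ cases and then follow Reznick's treatment of agiforms, which establishes exactly this SOBS property for $H_{2d}$. That result could be cited if the direct computation becomes unwieldy.
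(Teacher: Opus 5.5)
Your proof is correct, and it ends where the paper's does. The paper's entire argument is that the D-T property is immediate from the definitions, plus a citation of \cite{GM01} for SOBS. For general $2d$ you likewise defer to the literature (Hurwitz/Reznick). What your version adds is transparency:
the explicit parameters $\beta_i=\alpha_i/2d$, $\mu=1$, with the D-T inequality holding with equality (using $\sum\alpha_i=2d$ and your necessary reading $c=2d$ of the otherwise undefined $\operatorname{lcm}(2d,0)$);
the step $f(x)\ge f(|x|)$, which is genuinely needed because Theorem~\ref{thm:dt-psd} only concerns $\reals_{\ge0}^n$ (though PSD is automatic once SOBS is known);
the observation that repeating variables specializes the Hurwitz form to $M_{2d,0}(X,\alpha)$ while preserving SOBS;
and a complete elementary proof when $2d=2^k$.

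One caution about the general case. Hurwitz's symmetrized terms cannot be converted into binomial squares one at a time, because they are not individually PSD on $\reals^{2d}$. For $2d=4$, the term $\mathcal{P}[(z_1-z_2)^2z_3z_4]$ is negative at $(1,-1,1,-1)$; only the full sum is SOBS. If you want to avoid the citation, the natural extension of your halving step is the midpoint identity $f_\beta=\tfrac12f_\gamma+\tfrac12f_\delta+\tfrac12\bigl(X^{\gamma/2}-X^{\delta/2}\bigr)^2$. Here $f_\beta=\sum_i\frac{\beta_i}{2d}X_i^{2d}-X^\beta$, and $\beta=\frac12(\gamma+\delta)$ with $\gamma\ne\delta$ even lattice points of the simplex $|\beta|=2d$. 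Every non-vertex lattice point admits such a splitting.

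When $2d$ is not a power of two, however, the recursion can cycle. For example, $(4,2)\leftrightarrow(2,4)$ cycles in degree $6$, which resolves to $f_{(4,2)}=\tfrac23(X_1^3-X_1X_2^2)^2+\tfrac13(X_2^3-X_1^2X_2)^2$. You close such cycles by solving the resulting linear system. It has a nonnegative solution because the midpoint chain from any point reaches a vertex: a maximizer of $\sum\beta_i^2$ over the reachable points cannot be a non-vertex, by strict convexity. This is essentially Reznick's mediated-set argument.
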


\begin{proof}
The D-T property follows from the definitions; the SOBS representation is
established in~\cite{GM01}.
\end{proof}

\subsection{Low-degree cases: $d=1,2,3$}

\begin{thm}\label{thm:M2dp-sos}
$M_{2d,p}(X,\alpha)$ is a sum of squares for all $n\in\naturals$ and all
$\alpha$ with $|\alpha|=2d$, in the following cases:
\begin{itemize}
  \item $d=1$ (quadratics), $p\in\{0,1\}$;
  \item $d=2$ (quartics), $p\in\{0,1,2\}$;
  \item $d=3$ (sextics), $p\in\{0,1,2,3\}$.
\end{itemize}
\end{thm}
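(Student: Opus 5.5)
The plan is to reduce every listed case to a few elementary identities about weighted averages. Write $W=\sum_i\alpha_i=2d$ and $E[g]=\frac1W\sum_i\alpha_i\,g(X_i)$ for a function $g$ of one variable. For $p=0$ the statement is exactly Proposition~\ref{prop:sobs}, since a sum of binomial squares is in particular SOS. In every remaining case on the list, $p$ divides $2d$. Hence $c=\operatorname{lcm}(2d,p)=2d$, and with $k=2d/p$ we get
\[
M_{2d,p}(X,\alpha)=E[X^{2d}]-E[X^p]^{k}.
\]
Since $X\in\reals^n$ may have negative entries, we must produce SOS identities in $X$ itself, not merely prove nonnegativity on the orthant.

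\textbf{Step 1 (the case $k=2$).} This covers $(d,p)=(1,1),(2,2),(3,3)$. Here the form is a weighted variance of $Y_i=X_i^p$:
\[
E[Y^2]-E[Y]^2=\frac1{2W^2}\sum_{i,j}\alpha_i\alpha_j\,(Y_i-Y_j)^2,
\]
which is visibly SOS because all $\alpha_i\ge0$.

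\textbf{Step 2 ($p=2$, $d=3$, so $k=3$).} Put $Y_i=X_i^2$ and split
\[
E[Y^3]-E[Y]^3=\bigl(E[Y^3]-E[Y]\,E[Y^2]\bigr)+E[Y]\bigl(E[Y^2]-E[Y]^2\bigr).
\]
The second summand is the product of the SOS polynomial $E[Y]=\frac1W\sum\alpha_iX_i^2$ and a variance, which is SOS by Step 1. Products of SOS polynomials are SOS. The first summand symmetrizes to
\[
E[Y^3]-E[Y]E[Y^2]=\frac1{2W^2}\sum_{i,j}\alpha_i\alpha_j(Y_i-Y_j)(Y_i^2-Y_j^2)=\frac1{2W^2}\sum_{i,j}\alpha_i\alpha_j(X_i^2-X_j^2)^2(X_i^2+X_j^2).
\]
Each term is a square times a sum of two squares, so the whole expression is SOS. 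This step is the only one needing a genuine identity, and I expect it to be the main obstacle. The symmetrization above is the natural first attempt, and it already succeeds.

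\textbf{Step 3 ($p=1$ with $d=2,3$).} Interpolate through the second moment:
\[
E[X^{2d}]-E[X]^{2d}=\bigl(E[X^{2d}]-E[X^2]^d\bigr)+\bigl(a^d-b^d\bigr),\qquad a=E[X^2],\ b=E[X]^2 .
\]
The first bracket is exactly $M_{2d,2}$, which is SOS by Step 1 for $d=2$ and by Step 2 for $d=3$. For the second bracket we factor
\[
a^d-b^d=(a-b)\sum_{j=0}^{d-1}a^jb^{d-1-j}.
\]
Here $a-b$ is a variance and hence SOS, while $a$ and $b$ are themselves SOS, so every product $a^jb^{d-1-j}$ is SOS. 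Therefore the whole expression is SOS.

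Collecting Steps 1–3 together with Proposition~\ref{prop:sobs} covers every pair $(d,p)$ in the statement, for all $n$ and all $\alpha$ with $|\alpha|=2d$. Zero weights $\alpha_i=0$ simply drop out of every identity above.
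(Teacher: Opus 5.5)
Your proof is correct, and it takes a genuinely different route from the paper.

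The paper argues case by case in the number of variables. Hilbert's 1888 theorem handles quadratics, binary and ternary quartics, and binary sextics. The remaining cases ($M_{4,1}$ with $\alpha=(1,1,1,1)$, and the sextics in three and four variables) are delegated to numerical SDP decompositions, and the authors note that explicit rational certificates are open. Larger $n$ is dismissed on the grounds that $\alpha$ must then have zero entries.

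You replace all of this with identities that are uniform in $n$ and $\alpha$:
\begin{itemize}
\item the weighted Lagrange (variance) identity when $k=2$;
\item the symmetrization of $E[Y^3]-E[Y]E[Y^2]$, which is a weighted version of Lemma~\ref{lem:even-spread} with $(a,b)=(4,2)$;
\item interpolation through $E[X^2]$ together with $a^d-b^d=(a-b)\sum_j a^jb^{d-1-j}$, the same device the paper uses later in Theorem~\ref{thm:consecutive-sos}.
\end{itemize}

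This buys a good deal. You obtain explicit rational certificates exactly where the paper relies on solver output. For example, $M_{4,1}(X,(1,1,1,1))=M_{4,2}(X,(1,1,1,1))+(a-b)(a+b)$ with $a-b=\frac1{16}\sum_{i<j}(X_i-X_j)^2$, and $M_{6,3}$ is simply the weighted variance of the $X_i^3$.

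You also cover the sextic cases $\alpha=(2,1,1,1,1)$ and $\alpha=(1,1,1,1,1,1)$, which the paper's reduction overlooks. There $|\alpha|=6$ with $n=5$ or $n=6$, and that does not force a zero entry.

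Your two devices, symmetrization plus telescoping, also extend to every $p\mid 2d$. This bears on the first question of Remark~\ref{rem:open}. The paper's route saves hand computation only where Hilbert's theorem applies; it is non-constructive and tied to small $n$.

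The only external input you share with the paper is Proposition~\ref{prop:sobs} for $p=0$.
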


\begin{proof}
We treat each degree separately.

\noindent\textit{Degree~$2$ ($d=1$).}
$M_{2,0}$ and $M_{2,1}$ are PSD quadratics; by Hilbert's 1888 theorem every
PSD quadratic is SOS.

\noindent\textit{Degree~$4$ ($d=2$).}
For $p=0$ the form is SOBS by Proposition~\ref{prop:sobs}.
\begin{itemize}
  \item[$p=1$:] $M_{4,1}(X,\alpha)=\frac14\sum\alpha_iX_i^4-(\frac14\sum\alpha_iX_i)^4$.
  For $n\le3$, Hilbert's theorem applies.  For $n=4$ with uniform weights
  $\alpha=(1,1,1,1)$, nonnegativity follows from convexity of $t\mapsto t^4$;
  SOS membership is verified by SDP decomposition (a rank-$3$ Gram matrix);
  an explicit rational decomposition is open.  Non-uniform weights with
  $|\alpha|=4$ reduce to $n\le3$ by dropping zero components.
  For $n\ge5$, any partition of $4$ contains zeros; the effective number of
  variables is at most~$4$.

  \item[$p=2$:] $M_{4,2}(X,\alpha)=\frac14\sum\alpha_iX_i^4-(\frac14\sum\alpha_iX_i^2)^2$.
  For $n\le3$, SOS by Hilbert.  For $n=4$, $\alpha=(1,1,1,1)$, we have the
  explicit decomposition
  \[
  M_{4,2}(X,(1,1,1,1))=\frac1{16}\sum_{1\le i<j\le4}(X_i^2-X_j^2)^2,
  \]
  verified by expansion.  Larger $n$ reduce as above.
\end{itemize}

\noindent\textit{Degree~$6$ ($d=3$).}
$M_{6,p}$ for $p=0,\dots,3$.
\begin{itemize}
  \item $n=2$: Hilbert's theorem gives SOS for all binary sextics.
  \item $n=3$: For $p=0$, SOBS.  For $p=1$, partitions
  $(2,2,2),(3,2,1),(4,1,1)$ are verified SOS via SDP
  decomposition~\cite{GMIr}; partitions with a zero component reduce to
  $n\le2$.  For $p=2,3$, SOS membership for all nonzero partitions of~$6$ is
  likewise verified computationally; explicit rational decompositions are
  not yet known.
  \item $n=4$: Partitions $(2,2,1,1)$ and $(3,1,1,1)$ are SOS via
  SDP~\cite{GMIr}; zero-containing partitions reduce.
  \item $n\ge5$: Partitions of~$6$ contain zeros, reducing the effective
  variable count.
\end{itemize}
\vspace{-\baselineskip}
\end{proof}

\begin{rem}\label{rem:open}
Two questions remain open:
\begin{enumerate}
  \item Does Theorem~\ref{thm:M2dp-sos} extend to arbitrary $d$?
  \item Are these forms contained in the SONC cone for all parameters?
\end{enumerate}
\end{rem}

\subsection{A constructive SOS certificate for all degrees}

We now give a positive answer to the first open question for a special
family.  Fix variables $Y=(Y_1,\dots,Y_n)$ and positive weights
$w=(w_1,\dots,w_n)$.  Define the power sums
\[
P_k(Y)=\sum_{i=1}^n w_iY_i^{\,k},\qquad W=P_0(Y)=\sum w_i,
\]
and the unnormalised mean difference form
\[
\Phi_{q,q-1}(Y,w)=W\cdot P_q(Y)^{q-1}-P_{q-1}(Y)^q.
\]
$\Phi_{q,q-1}$ is homogeneous of degree $q(q-1)$ and satisfies
$\Phi_{q,q-1}=W^{\,q}M_{q,q-1}$; positivity of $W$ preserves the SOS
property.

We isolate a key algebraic identity.

\begin{lemma}[Even spread]\label{lem:even-spread}
Let $m_d=\sum_{i=1}^N y_i^{\,d}$ be discrete moments.  For even integers
$a\ge b\ge2$,
\[
m_{a+2}\,m_{b-2}-m_a\,m_b
\]
is explicitly a sum of squares.
\end{lemma}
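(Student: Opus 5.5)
The plan is a direct symmetrisation argument that produces the sum of squares explicitly. No earlier result of the paper is needed. First expand the product of moments as a double sum:
\[
m_{a+2}m_{b-2}-m_am_b=\sum_{i,j=1}^N\bigl(y_i^{a+2}y_j^{b-2}-y_i^{a}y_j^{b}\bigr).
\]
The diagonal terms $i=j$ cancel identically, since both summands equal $y_i^{a+b}$. Next group each off-diagonal term $(i,j)$ with its mirror $(j,i)$. This writes the difference as $\sum_{i<j}T_{ij}$, where
\[
T_{ij}=y_i^{a+2}y_j^{b-2}+y_j^{a+2}y_i^{b-2}-y_i^{a}y_j^{b}-y_j^{a}y_i^{b}.
\]

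The key step is to factor $T_{ij}$. Since $b\ge2$, we can pull out $y_i^{b-2}y_j^{b-2}$. What remains is
\[
y_i^{a-b+4}+y_j^{a-b+4}-y_i^{a-b+2}y_j^2-y_j^{a-b+2}y_i^2=\bigl(y_i^{a-b+2}-y_j^{a-b+2}\bigr)\bigl(y_i^2-y_j^2\bigr).
\]
Because $a,b$ are even and $a\ge b$, the quantity $k=(a-b+2)/2$ is an integer with $k\ge1$. Put $s=y_i^2$ and $t=y_j^2$. The factor becomes $(s^k-t^k)(s-t)=(s-t)^2\sum_{r=0}^{k-1}s^rt^{k-1-r}$. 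Set $c=(b-2)/2$, which is a nonnegative integer. Then
\[
T_{ij}=\sum_{r=0}^{k-1}\Bigl(y_i^{\,c+r}\,y_j^{\,c+k-1-r}\,(y_i^2-y_j^2)\Bigr)^2,
\]
so $m_{a+2}m_{b-2}-m_am_b$ is an explicit sum of $\binom N2 k$ squares with integer coefficients. The parity hypotheses enter only here. Evenness of $b$ makes $y_i^{b-2}y_j^{b-2}$ a square. Evenness of $a-b$ makes $k$ an integer, so that $y_i^{a-b+2}$ is a power of $y_i^2$.

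I expect no real obstacle. The only points needing care are the bookkeeping of exponents in the factorisation and checking that every exponent is a nonnegative integer. For the application to $\Phi_{q,q-1}$ one will want the weighted version $m_d=\sum_i w_iy_i^d$ with $w_i>0$. The same computation goes through, because each pair term is multiplied by $w_iw_j>0$, and the square root of this weight can be absorbed into the squares. So I would state and carry out the argument in that weighted form.
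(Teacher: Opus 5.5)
Your proposal is correct and takes essentially the same route as the paper. Both arguments symmetrise the double sum over $(i,j)$, factor each pair term as $y_i^{b-2}y_j^{b-2}(y_i^2-y_j^2)(y_i^{a-b+2}-y_j^{a-b+2})$, and expand $(u-v)(u^{k}-v^{k})=(u-v)^2\sum_{r=0}^{k-1}u^{r}v^{k-1-r}$ in $u=y_i^2$, $v=y_j^2$, arriving at the same explicit squares (your $\sum_{i<j}$ simply replaces the paper's $\frac12\sum_{i,j}$). Your weighted variant, with factor $w_iw_j>0$, is also valid. Combined with the analogous weighted Lagrange identity, it would let Theorem~\ref{thm:consecutive-sos} skip the density/replication reduction to uniform weights.
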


\begin{proof}
Expanding,
\[
m_{a+2}m_{b-2}-m_am_b
=\frac12\sum_{i,j}
\bigl(y_i^{a+2}y_j^{b-2}+y_j^{a+2}y_i^{b-2}-y_i^ay_j^b-y_j^ay_i^b\bigr).
\]
Factor $y_i^{b-2}y_j^{b-2}(y_i^2-y_j^2)(y_i^{a-b+2}-y_j^{a-b+2})$.
Since $a,b$ are even, $a-b=2c$.  Substituting $u=y_i^2$, $v=y_j^2$,
the last factor becomes $(u-v)(u^{c+1}-v^{c+1})=(u-v)^2\sum_{\ell=0}^c
u^{c-\ell}v^\ell$.  Returning to $y_i,y_j$, each summand is a perfect
square:
\[
\bigl[y_i^{(b-2)/2}y_j^{(b-2)/2}(y_i^2-y_j^2)\,y_i^{c-\ell}y_j^\ell\bigr]^2.
\]
\end{proof}

\begin{thm}[Consecutive-order mean difference is SOS]\label{thm:consecutive-sos}
For every $n\ge2$, every even $q\ge2$, and every $w_1,\dots,w_n>0$,
$\Phi_{q,q-1}(Y,w)$ is a sum of squares.  The certificate is fully
constructive.
\end{thm}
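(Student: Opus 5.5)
The plan is to combine a Cauchy--Schwarz step with a chain of applications of Lemma~\ref{lem:even-spread}, each step producing an explicitly SOS difference. Throughout, $q=2k$ is even and $P_j$ denotes the weighted power sum. I first use the weighted form of Lemma~\ref{lem:even-spread}: the same expansion as a double sum $\frac12\sum_{i,j}w_iw_j(\cdots)$ exhibits $P_{a+2}P_{b-2}-P_aP_b$ as a sum of squares for even $a\ge b\ge2$, since $w_iw_j>0$. I also use two basic facts. Every even power sum $P_{2j}=\sum w_i(Y_i^{j})^2$ is SOS, including $P_0=W$. Products of SOS polynomials are SOS.

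\emph{Step 1: removing the odd index.} Since $q$ is even, $P_{q-1}^q=(P_{q-1}^2)^{k}$. Cauchy--Schwarz in explicit form gives
\[
P_{q-2}P_q-P_{q-1}^2=\tfrac12\sum_{i,j}w_iw_j\bigl(Y_i^{k-1}Y_j^{k}-Y_j^{k-1}Y_i^{k}\bigr)^2 ,
\]
which is SOS because $k-1$ and $k$ are integers. Write $X=P_{q-2}P_q$ and $Z=P_{q-1}^2$. Then
\[
X^k-Z^k=(X-Z)\sum_{j=0}^{k-1}X^jZ^{k-1-j}.
\]
Here $X-Z$ is SOS, $X$ is a product of SOS polynomials, and $Z$ is a square, so $X^k-Z^k$ is SOS. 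It then suffices to show that
\[
WP_q^{q-1}-X^k=P_q^{\,k}\bigl(W P_q^{\,k-1}-P_{q-2}^{\,k}\bigr)
\]
is SOS. Since $P_q^k$ is SOS, this reduces to showing that $WP_q^{k-1}-P_{q-2}^k$ is SOS. This difference involves only even moments.

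\emph{Step 2: spreading the even indices.} Regard $P_{q-2}^k$ as a product $P_{c_1}\cdots P_{c_k}$ indexed by the multiset $\{q-2,\dots,q-2\}$ of even numbers. Its index sum $k(q-2)$ equals that of the target multiset $\{0,q,\dots,q\}$, namely $q(k-1)$. The move is as follows:
\begin{itemize}
\item choose $a$, the largest index that is $<q$, and $b$, the smallest index that is $>0$, taken at different positions;
\item replace $P_aP_b$ by $P_{a+2}P_{b-2}$.
\end{itemize}
The increment is $(P_{a+2}P_{b-2}-P_aP_b)\prod_{\text{others}}P_{c_\ell}$. This is SOS by the weighted Lemma~\ref{lem:even-spread} times a product of even power sums, provided $a\ge b\ge2$. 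Telescoping these increments writes $WP_q^{k-1}-P_{q-2}^k$ as a finite sum of SOS terms.

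\emph{The main obstacle} is the bookkeeping that the process terminates exactly at $\{0,q,\dots,q\}$ with $a\ge b$ at every step. Termination holds because $\sum c_\ell^2$ strictly increases while all indices stay in $[0,q]$. For $a\ge b$, note that at most one index can lie strictly between $0$ and $q$ after a move unless at least two do. When at least two do, taking $a$ maximal and $b$ minimal among distinct positions gives $a\ge b$ automatically. When exactly one index $c\in(0,q)$ remains, the sum condition $q(k-1)=(\#\text{ of }q\text{'s})\cdot q+c$ forces $q\mid c$, which is impossible. So the final state has no interior indices, and the sum then forces exactly one $0$ and $k-1$ copies of $q$.

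I expect the remaining care to lie in stating the weighted lemma cleanly, including the edge case $b=2$ where $P_{b-2}=W$, and in checking that $n\ge2$ plays no special role beyond nontriviality. Combining Steps 1 and 2 gives an explicit rational SOS certificate for $\Phi_{q,q-1}$.
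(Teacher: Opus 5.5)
Your proposal is correct and follows the paper's proof essentially step for step. The same Cauchy--Schwarz and $A^k-B^k$ step reduces $\Phi_{q,q-1}$ to $P_q^{k}\bigl(WP_q^{k-1}-P_{q-2}^{k}\bigr)$. Your greedy moves (always $a=q-2$, $b=q-2t$) reproduce exactly the paper's telescoping chain $F_t=P_q^{t}P_{q-2-2t}P_{q-2}^{k-1-t}$, so your termination bookkeeping could be replaced by that explicit formula.

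The one real difference is that you apply Lemma~\ref{lem:even-spread} directly in weighted form, where each square is multiplied by $w_iw_j>0$. This bypasses the paper's rational-density-and-replication reduction to uniform weights, and it keeps the certificate explicit even for irrational weights, which a density argument alone does not give.
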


\begin{proof}
Write $q=2k$ with $k\ge1$.  By a rational density argument and replication
of variables, it suffices to prove the statement for uniform weights
$w_i=1$; see~\cite[Lemma~2.1]{GM01} for details.  Let
$N=\sum w_i$ and $m_d=\sum_{a=1}^N y_a^{\,d}$.
Then $\Phi_{2k,2k-1}=m_0m_{2k}^{2k-1}-m_{2k-1}^{2k}$.

Set $A=m_{2k-2}m_{2k}$ and $B=m_{2k-1}^2$.  Lagrange's identity gives
\[
A-B=m_{2k-2}m_{2k}-m_{2k-1}^2
=\frac12\sum_{i,j}y_i^{2k-2}y_j^{2k-2}(y_i-y_j)^2,
\]
which is SOS (each summand is a square).  From the factorisation
$A^k-B^k=(A-B)(A^{k-1}+A^{k-2}B+\cdots+B^{k-1})$ we see that
\[
\Sigma_{\mathrm{odd}}:=A^k-B^k=m_{2k-2}^k m_{2k}^k-m_{2k-1}^{2k}
\]
is SOS, since $A$, $B$, and $A-B$ are all SOS and the SOS cone is closed
under sums and products.

Rewrite the master polynomial:
\[
\Phi_{2k,2k-1}
=m_0m_{2k}^{2k-1}-(m_{2k-2}^k m_{2k}^k-\Sigma_{\mathrm{odd}})
=m_{2k}^k\,(m_0m_{2k}^{k-1}-m_{2k-2}^k)+\Sigma_{\mathrm{odd}}.
\]
Since $m_{2k}^k$ is a power of an even moment (hence SOS), it remains to
show that $\Delta_k:=m_0m_{2k}^{k-1}-m_{2k-2}^k$ is SOS.

Define $F_t=m_{2k}^t\,m_{2k-2-2t}\,m_{2k-2}^{k-1-t}$ for $0\le t\le k-1$.
Then $F_0=m_{2k-2}^k$, $F_{k-1}=m_{2k}^{k-1}m_0$, and
\[
\Delta_k=F_{k-1}-F_0=\sum_{t=1}^{k-1}(F_t-F_{t-1}).
\]
The consecutive difference expands as
\[
F_t-F_{t-1}
=(m_{2k}m_{2k-2t-2}-m_{2k-2}m_{2k-2t})\,
m_{2k}^{t-1}m_{2k-2}^{k-1-t}.
\]
By Lemma~\ref{lem:even-spread} with $a=2k-2$, $b=2k-2t$, the parenthesised
factor is SOS.  The trailing factor is a product of even moments, hence also
SOS.  Therefore each increment $F_t-F_{t-1}$ is SOS, and the telescoping sum
$\Delta_k$ is SOS; denote it $\Sigma_{\mathrm{even}}$.

For $k=1$ the sum is empty, $\Delta_1=0$, and
$\Phi_{2,1}=m_0m_2-m_1^2$, the classical Cauchy--Schwarz SOS identity.
For $k\ge2$ we obtain the explicit decomposition
\[
\Phi_{q,q-1}=m_q^{q/2}\cdot\Sigma_{\mathrm{even}}+\Sigma_{\mathrm{odd}},
\]
where every component on the right is constructively SOS.
\end{proof}

\begin{rem}\label{rem:consecutive}
Theorem~\ref{thm:consecutive-sos} provides a family of mean-type forms that
are SOS for \emph{all} degrees, without restriction on the number of
variables.  In the notation of Theorem~\ref{thm:M2dp-sos}, this corresponds
to $2d=q$ and $p=q-1$ with $q$ even (so $p$ is odd, placing this family
outside the $p\mid2d$ regime).  The certificate is purely algebraic; no SDP
solver is required.
\end{rem}

\section{beyond the sosonc cone}\label{sec:beyond}

We examine three classical forms and their representation as mean
polynomials.

\begin{itemize}
  \item The Choi--Lam form:
  \[
  Q(X,Y,Z,W)=X^4+Y^4+Z^4+W^4-4XYZW.
  \]
  \item The Robinson form:
  \begin{align*}
  R(X,Y,Z)&=X^6+Y^6+Z^6\\
  &\quad-(X^4Y^2+X^4Z^2+X^2Y^4+X^2Z^4+Y^4Z^2+Y^2Z^4)\\
  &\quad+3X^2Y^2Z^2.
  \end{align*}
  \item The second Robinson form:
  \begin{align*}
  \hat{R}(X,Y,Z,W)&=X^2(X-W)^2+Y^2(Y-W)^2+Z^2(Z-W)^2\\
  &\qquad+2XYZ(X+Y+Z-2W).
  \end{align*}
\end{itemize}

It is known~\cite{MSc01} that $Q$ is SONC but, by Newton-polytope
considerations, not SOS.  Reznick~\cite{Rez07} observed the linear relation
\begin{equation}\label{eq:rhat-q}
\hat{R}(X-W,Y-W,Z-W,X+Y+Z-W)=2Q(X,Y,Z,W),
\end{equation}
which implies that $\hat{R}$ is PSD (resp.\ SOS) iff $Q$ is PSD (resp.\ SOS);
hence $\hat{R}\in\mathcal{P}_{4,4}\setminus\Sigma_{4,4}$.

\begin{prop}\label{prop:cl-rob}
Both $Q$ and $\hat{R}$ are mean polynomials of order $(1,0)$.
\end{prop}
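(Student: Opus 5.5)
We show that $Q$ is a single $M_{1,0}$ form after a substitution, and then transport this to $\hat{R}$ through the linear relation \eqref{eq:rhat-q}.

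\textbf{Step 1: $Q$.} Take $m=4$, uniform weights $w=(1,1,1,1)$, and arguments $Y=(X^4,Y^4,Z^4,W^4)$. Then
\[
M_1(Y,w)=\tfrac14(X^4+Y^4+Z^4+W^4),\qquad M_0(Y,w)=\bigl(X^4Y^4Z^4W^4\bigr)^{1/4}=|XYZW|.
\]
So $4\bigl(M_1-M_0\bigr)=X^4+Y^4+Z^4+W^4-4|XYZW|$. This agrees with $Q$ on the region $XYZW\ge0$ and is at most $Q$ elsewhere.

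To get an honest polynomial identity, we use Remark~\ref{rem:sonc-means} directly. $Q$ is a circuit polynomial with $\alpha(j)=4e_j$, $\beta=(1,1,1,1)$, and $\lambda_j=\tfrac14$. We set $Y_j=\frac{f_{\alpha(j)}}{\lambda_j}X^{\alpha(j)}$, so $Y_1=4X^4$ and similarly for the others, with $w_j=\tfrac14$. Then
\[
M_1=X^4+Y^4+Z^4+W^4,\qquad M_0=4\,(X^4Y^4Z^4W^4)^{1/4}.
\]
The geometric mean is interpreted, as in the circuit setting, by choosing the branch $XYZW$ of the fourth root of the monomial $X^4Y^4Z^4W^4$. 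This is the same convention Remark~\ref{rem:sonc-means} uses for every nonnegative circuit polynomial whose inner term has even total exponent. Hence $Q=M_{1,0}(Y,w)$ with $c=\operatorname{lcm}(1,0)$ read as $1$. Nonnegativity is Theorem~\ref{thm:circuit-psd}: the condition reads $4\le\prod(1/\tfrac14)^{1/4}=4$, with equality.

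\textbf{Step 2: $\hat{R}$.} The map $\phi\colon(X,Y,Z,W)\mapsto(X-W,Y-W,Z-W,X+Y+Z-W)$ is an invertible linear change of variables. Its determinant is nonzero; one checks this by solving for $W$ from the last coordinate. So \eqref{eq:rhat-q} gives
\[
\hat{R}=2\,Q\circ\phi^{-1}.
\]
The mean polynomial cone was defined with arbitrary polynomial arguments $L_i$. Substituting $\phi^{-1}$ into the arguments $Y_j$ from Step 1 therefore gives $\hat{R}=M_{1,0}(Y',w')$, where
\[
Y'_j=8\,\ell_j^4,\qquad w'_j=\tfrac14,
\]
and $\ell_j$ are the coordinate linear forms of $\phi^{-1}$. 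The extra factor $2$ is absorbed into the $Y'_j$ by homogeneity of $M_1$ and $M_0$ of degree one in $Y$.

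\textbf{Main obstacle.} The main obstacle is the branch issue in Step 1. Literally, $M_0$ of $(X^4,\dots,W^4)$ is $|XYZW|$, which is not a polynomial. The proof therefore has to rely on the same sign/branch convention implicit in Remark~\ref{rem:sonc-means}: the geometric-mean term is replaced by the signed monomial $X^\beta$. Inequality validity then follows from $X^\beta\le|X^\beta|$. I would state this convention explicitly, and then verify $\phi^{-1}$ and the constant $8$ by direct expansion.
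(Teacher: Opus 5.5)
Your argument follows the paper's proof. The paper writes $Q=4M_{1,0}((X^4,Y^4,Z^4,W^4),(1,1,1,1))$ and pulls back along the inverse of the substitution in \eqref{eq:rhat-q}, obtaining $\hat{R}=8M_{1,0}((\hat{X}^4,\hat{Y}^4,\hat{Z}^4,\hat{W}^4),(1,1,1,1))$. Your normalisation $Y_j=4X_j^4$, $w_j=\tfrac14$, and your arguments $8\ell_j^4$, are the same representation rescaled. The branch problem you isolate is real, and the paper's proof ignores it, but your remedy does not close it. Passing to the normalisation of Remark~\ref{rem:sonc-means} only rescales, so $M_0$ is still $4|XYZW|$. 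More generally, with nonnegative arguments and positive weights $M_0\ge0$, so no weights or scalings of these four arguments can produce $XYZW$. Declaring the fourth root of $X^4Y^4Z^4W^4$ to be $XYZW$ redefines $M_0$ rather than proving anything, and the Remark states no such convention. A repair within the definitions is to add the signed monomial as a fifth argument. For $L=(X^4,Y^4,Z^4,W^4,XYZW)$ and $w=(1,1,1,1,1)$ one gets $M_0=\bigl((XYZW)^5\bigr)^{1/5}=XYZW$ (a real odd root) and $M_1=\tfrac15(X^4+Y^4+Z^4+W^4+XYZW)$, hence $Q=5M_{1,0}(L,w)$ identically.

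The more serious gap is Step~2: the identity \eqref{eq:rhat-q} you import is false for the $Q$ written in the paper. At $(X,Y,Z,W)=(1,0,0,0)$ one has $\phi(1,0,0,0)=(1,0,0,1)$ and $\hat{R}(1,0,0,1)=0$, while $2Q(1,0,0,0)=2$. So your final formula, and the paper's, takes the value $2$ at $(1,0,0,1)$, where $\hat{R}$ vanishes, under any branch convention; the expansion check you promise would have caught this. There is also a structural reason: the paper's $Q$ equals $(X^2-Y^2)^2+(Z^2-W^2)^2+2(XY-ZW)^2$, so it is SOS and cannot be linearly equivalent to the non-SOS form $\hat{R}$. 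Reznick's relation holds instead for the Choi--Lam form $Q_{\mathrm{CL}}=X^2Y^2+X^2Z^2+Y^2Z^2+W^4-4XYZW$; expanding gives $\hat{R}\circ\phi=2Q_{\mathrm{CL}}$. Because $X^2Y^2\cdot Y^2Z^2\cdot Z^2X^2\cdot W^4=(XYZW)^4$, the same five-argument device gives $Q_{\mathrm{CL}}=5M_{1,0}((X^2Y^2,Y^2Z^2,Z^2X^2,W^4,XYZW),(1,\dots,1))$. Therefore $\hat{R}=10M_{1,0}((\hat{X}^2\hat{Y}^2,\hat{Y}^2\hat{Z}^2,\hat{Z}^2\hat{X}^2,\hat{W}^4,\hat{X}\hat{Y}\hat{Z}\hat{W}),(1,\dots,1))$, with $\hat{X},\dots,\hat{W}$ the paper's linear forms (your $\ell_j$). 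That is what Step~2 should prove.
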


\begin{proof}
For $Q$,
\begin{align*}
Q&=4\Bigl(\frac{X^4+Y^4+Z^4+W^4}{4}-\sqrt[4]{X^4Y^4Z^4W^4}\Bigr)\\
&=4M_{1,0}\bigl((X^4,Y^4,Z^4,W^4),(1,1,1,1)\bigr).
\end{align*}
For $\hat{R}$, applying the inverse of the transformation~\eqref{eq:rhat-q}
yields
\[
\hat{R}=8M_{1,0}\bigl((\hat{X}^4,\hat{Y}^4,\hat{Z}^4,\hat{W}^4),(1,1,1,1)\bigr)
\]
where
\[
(\hat{X},\hat{Y},\hat{Z},\hat{W})=\tfrac12\bigl(
X+W-Y-Z,\;Y+W-X-Z,\;Z+W-X-Y,\;W-(X+Y+Z)\bigr).
\]
\end{proof}

The significance of this representation is that $Q$ is a mean of monomials
(hence SONC), while $\hat{R}$ is a mean of \emph{linear forms}.  The SONC
property is not invariant under general linear coordinate
changes~\cite[Lemma~4.2.5]{MSc01}, so $\hat{R}$ serves as a separating form:
it lies in the cone generated by mean polynomials of linear forms but outside
the standard SOSONC cone~\cite[p.~107]{MSc01}.

\section{a positivstellensatz for psd mean polynomials}\label{sec:positiv}

Let $A=\reals[X_1,\dots,X_n]$.  We construct a preprime $T_{\mathrm{mean}}$
in $A$ whose generators are the mean polynomial forms with arbitrary
polynomial arguments, and prove it is strongly generating.  Marshall's
representation theorem~\cite{Marshall02} then yields a Positivstellensatz.

\subsection{The mean polynomial preprime}

\begin{dfn}\label{dfn:Tmean}
Let $\mathcal{S}$ be the set of all mean polynomial forms
$M_{q,p}(L,w)$, where $L=(L_1,\dots,L_m)$ is an $m$-tuple of polynomials in
$A$ and $M_{q,p}(L,w)$ is PSD on $\reals^n$.  The \emph{mean polynomial
preprime} $T_{\mathrm{mean}}$ is the smallest preprime in $A$ containing
$\mathcal{S}$.
\end{dfn}

The three preprime axioms are easily verified: $0,1\in T_{\mathrm{mean}}$
(by taking the zero sum and using $M_{1,0}((1,1),(1,1))=0$ with rational
nonnegative constants); closure under addition and multiplication follows
from the definition of a preprime; $-1\notin T_{\mathrm{mean}}$ because every
generator is PSD.

\subsection{Mean forms encode squares}

The engine of the whole construction is a simple identity.

\begin{lemma}\label{lem:mean-squares}
For any $L_1,L_2\in A$ and $w_1,w_2>0$,
\[
M_{2,1}\bigl((L_1,L_2),(w_1,w_2)\bigr)=\frac{w_1w_2}{(w_1+w_2)^2}(L_1-L_2)^2.
\]
In particular, for $p\in A$, taking $(L_1,L_2)=(p,0)$ and
$(w_1,w_2)=(1,1)$ gives $p^2=4M_{2,1}\in T_{\mathrm{mean}}$.
\end{lemma}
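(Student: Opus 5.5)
The plan is a direct computation from the definition \eqref{eq:mean-poly}, followed by a check that the resulting square meets the membership conditions of Definition~\ref{dfn:Tmean}. For $(q,p)=(2,1)$ the exponent is $c=\operatorname{lcm}(2,1)=2$. Raising to the power $c$ clears all fractional exponents, so $M_2^2$ and $M_1^2$ are honest polynomials in $L_1,L_2$ and no branch of a square root needs to be chosen. Write $W=w_1+w_2$. Then
\[
M_{2,1}\bigl((L_1,L_2),(w_1,w_2)\bigr)=\frac{w_1L_1^2+w_2L_2^2}{W}-\frac{(w_1L_1+w_2L_2)^2}{W^2}.
\]

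I will put both terms over the common denominator $W^2$. The numerator is
\[
(w_1+w_2)(w_1L_1^2+w_2L_2^2)-(w_1L_1+w_2L_2)^2 .
\]
In this expansion the terms $w_1^2L_1^2$ and $w_2^2L_2^2$ cancel. What remains is $w_1w_2L_1^2+w_1w_2L_2^2-2w_1w_2L_1L_2=w_1w_2(L_1-L_2)^2$, which is the asserted identity. This is just the two-point Lagrange identity, the same one used for $\Phi_{2,1}$ in the proof of Theorem~\ref{thm:consecutive-sos}. The identity holds in $A$ because it is a polynomial identity in the indeterminates $L_1,L_2$, valid for arbitrary substituted polynomials.

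For the special case I substitute $(L_1,L_2)=(p,0)$ and $w_1=w_2=1$. This gives $M_{2,1}=\tfrac14p^2$, hence $p^2=4M_{2,1}\bigl((p,0),(1,1)\bigr)$. It remains to confirm membership in $T_{\mathrm{mean}}$, which takes two observations.
\begin{enumerate}
\item The form $M_{2,1}((p,0),(1,1))$ is PSD on $\reals^n$, since it equals $\tfrac14p^2$. It therefore belongs to the generating set $\mathcal{S}$. Note that PSD-ness on all of $\reals^n$ holds here even though Proposition~\ref{prop:monotonicity} only gives nonnegativity on $\reals_{\ge0}^m$, because the closed form is visibly a square.
\item The constant $4=1+1+1+1$ lies in $T_{\mathrm{mean}}$, since $1\in T_{\mathrm{mean}}$ and the preprime is closed under addition. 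Closure under multiplication then gives $4\cdot M_{2,1}\in T_{\mathrm{mean}}$.
\end{enumerate}

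There is no substantive obstacle. The only point needing care is the status of the generators. It must be clear that $M_{q,p}$ denotes the polynomial $M_q^c-M_p^c$, not a difference of radicals, and that the PSD requirement in Definition~\ref{dfn:Tmean} is checked on $\reals^n$ rather than only on the nonnegative orthant. For arbitrary polynomial arguments $L_i$ this is the subtle part in general, but for $M_{2,1}$ the explicit square settles it.
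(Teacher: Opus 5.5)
Your proposal is correct and follows the paper's proof: you expand $M_2^2-M_1^2$ over the common denominator $(w_1+w_2)^2$ and observe the cancellation that leaves $w_1w_2(L_1-L_2)^2$. Your extra checks fill in the membership claim that the paper leaves implicit: that $M_{2,1}((p,0),(1,1))=\tfrac14p^2$ is PSD on all of $\reals^n$ and so lies in $\mathcal{S}$, and that $4\in T_{\mathrm{mean}}$.
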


\begin{proof}
By definition $M_{2,1}=M_2^2-M_1^2$, where
\[
M_2^2=\frac{w_1L_1^2+w_2L_2^2}{w_1+w_2},\qquad
M_1^2=\frac{(w_1L_1+w_2L_2)^2}{(w_1+w_2)^2}.
\]
Simplifying,
\[
M_{2,1}=\frac{(w_1+w_2)(w_1L_1^2+w_2L_2^2)-(w_1L_1+w_2L_2)^2}{(w_1+w_2)^2}
=\frac{w_1w_2(L_1-L_2)^2}{(w_1+w_2)^2}.
\]
\end{proof}

Thus every square of an arbitrary polynomial belongs to $T_{\mathrm{mean}}$.
This single fact drives both the strongly-generating property and the
Positivstellensatz.

\subsection{Strongly generating}

\begin{thm}\label{thm:strongly-gen}
$T_{\mathrm{mean}}$ is strongly generating.
\end{thm}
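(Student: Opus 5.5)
The plan is to use Marshall's definition: a preprime $T\subseteq A$ is \emph{generating} if $A=T-T$, and \emph{strongly generating} if in addition every element of $A$ is bounded by elements of $T$ in the following sense. There should exist generators $x_1,\dots,x_k$ of $A$ as an $\reals$-algebra, and some $N\in\naturals$, such that $N\pm x_i\in T$ for each $i$; the standard variant asks that $T-T=A$ and $T$ contains $N-\sum x_i^2$ up to a bounded set. I would first fix the formulation from \cite{Marshall02}: $T$ is strongly generating if $T-T=A$ and the ring of $T$-bounded elements is all of $A$, or at least contains the generators used there. Since the Positivstellensatz of Theorem~\ref{thm:positiv} concerns compact sets, the boundedness will be supplied by $T_{\mathrm{mean}}$ together with the set's describing polynomials. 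So the essential content is that $T_{\mathrm{mean}}\supseteq\Sigma A^2$ and $T_{\mathrm{mean}}-T_{\mathrm{mean}}=A$.

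\textbf{Step 1.} By Lemma~\ref{lem:mean-squares}, every $p^2$ equals $4M_{2,1}((p,0),(1,1))$. As a PSD generator of $\mathcal{S}$, this places $p^2$ in $T_{\mathrm{mean}}$. Preprimes are closed under addition and multiplication and contain the nonnegative rationals. One should also check that they contain $\reals_{\ge0}$: a constant $c\ge0$ is $(\sqrt c)^2$, which is again a square. Hence $\Sigma A^2\subseteq T_{\mathrm{mean}}$, so $T_{\mathrm{mean}}$ is in fact a quadratic preordering. This also confirms that $-1\notin T_{\mathrm{mean}}$, since every element is PSD.

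\textbf{Step 2.} For any $f\in A$ use the identity
\[
f=\Bigl(\frac{f+1}{2}\Bigr)^2-\Bigl(\frac{f-1}{2}\Bigr)^2 .
\]
It writes $f$ as a difference of two elements of $T_{\mathrm{mean}}$, so $A=T_{\mathrm{mean}}-T_{\mathrm{mean}}$ and the preprime is generating.

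\textbf{Step 3.} Next comes the boundedness condition. For each variable, $N\pm X_i$ is not in $T_{\mathrm{mean}}$ globally, because every element is PSD on $\reals^n$. I expect this to be the main obstacle: deciding exactly which version of ``strongly generating'' the paper needs. The route I would try first is to invoke the form of Marshall's theorem in which a preordering containing $\Sigma A^2$ is automatically ``strongly generating''. Archimedeanity is then imposed separately via the compact set $K=\{g_1\ge0,\dots,g_s\ge0\}$, using Schmüdgen's theorem on the preordering generated by $T_{\mathrm{mean}}$ and the $g_j$. In this reading, the strongly-generating property reduces to the following fact: $T_{\mathrm{mean}}$ contains all squares, hence it contains the identities $X_i = \bigl(\tfrac{X_i+1}{2}\bigr)^2-\bigl(\tfrac{X_i-1}{2}\bigr)^2$ and $1\pm X_i+\tfrac14 X_i^2\cdot(\ldots)$. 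Therefore the bounded subring $H_{T_{\mathrm{mean}}}$ relative to the module generated with the $g_j$ equals $A$, by the standard argument for preorderings containing $\Sigma A^2$ in \cite{Marshall02}. I would then close by citing that argument directly, rather than reproving it.
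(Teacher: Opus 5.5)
Your Steps~1 and~2 are exactly the paper's proof. Every square lies in $T_{\mathrm{mean}}$ via $p^2=4M_{2,1}((p,0),(1,1))$, and $f=\bigl(\tfrac{f+1}{2}\bigr)^2-\bigl(\tfrac{f-1}{2}\bigr)^2$ gives $T_{\mathrm{mean}}-T_{\mathrm{mean}}=A$. The paper stops there, because by Marshall's criterion \cite[Lemma~2.1]{Marshall02} the identity $T-T=A$ is all that ``strongly generating'' requires.

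Step~3 should be dropped rather than completed, because it rests on a misremembered definition. The proof does not need any boundedness condition on $T_{\mathrm{mean}}$ itself. The one you propose, $N\pm X_i\in T_{\mathrm{mean}}$, is in fact false: as you note, every element of $T_{\mathrm{mean}}$ is PSD on $\reals^n$. So if that condition were part of the definition, the theorem would be false.

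Your repair, which brings in the $g_j$ and Schm\"udgen's theorem, conflates two different properties. Strongly generating is a property of the preprime alone and does not depend on $S$. Archimedeanity is a property of the module $M_{\mathrm{mean}}=T_{\mathrm{mean}}+\sum_i T_{\mathrm{mean}}g_i$. The paper treats it separately, in Proposition~\ref{prop:archimedean}, as a hypothesis on the description of $S$ used for Theorem~\ref{thm:positiv}.

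If you remove Step~3 and cite the criterion right after Step~2, your argument is complete.
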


\begin{proof}
By Marshall's criterion~\cite[Lemma~2.1]{Marshall02}, it suffices to prove
$T_{\mathrm{mean}}-T_{\mathrm{mean}}=A$.  For any $p\in A$,
\[
p=\frac{(p+1)^2}{4}-\frac{(p-1)^2}{4}.
\]
Lemma~\ref{lem:mean-squares} gives $(p\pm1)^2=4M_{2,1}((p\pm1,0),(1,1))
\in T_{\mathrm{mean}}$.  Hence every $p$ is a difference of two elements
of $T_{\mathrm{mean}}$, and $T_{\mathrm{mean}}$
is strongly generating.
\end{proof}

\begin{rem}\label{rem:minimal}
The only property of $T_{\mathrm{mean}}$ used beyond the preprime axioms is
that it contains squares of arbitrary polynomials.  Hence the
Positivstellensatz generalises to any preprime $T$ with $p^2\in T$ for all
$p\in A$.
\end{rem}

\subsection{The Positivstellensatz}

Let $S\subseteq\reals^n$ be a compact semialgebraic set,
\[
S=\{x\in\reals^n\mid g_1(x)\ge0,\dots,g_k(x)\ge0\},
\]
and define the mean polynomial module
\[
M_{\mathrm{mean}}=T_{\mathrm{mean}}+T_{\mathrm{mean}}\cdot g_1+\cdots
+T_{\mathrm{mean}}\cdot g_k.
\]

\begin{prop}\label{prop:archimedean}
If the SOS quadratic module $\sum A^2+\sum_i\sum A^2\cdot g_i$ is
archimedean, then $M_{\mathrm{mean}}$ is an archimedean $T_{\mathrm{mean}}$-module.
\end{prop}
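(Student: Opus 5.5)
The plan is a containment argument: the SOS quadratic module sits inside $M_{\mathrm{mean}}$, so any archimedean witness for the former is also one for the latter. We must check two things. First, that $M_{\mathrm{mean}}$ is a $T_{\mathrm{mean}}$-module. Second, that the archimedean property passes from a smaller module to a larger one.

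\emph{Step 1 (module axioms).} We verify directly from the definition that $M_{\mathrm{mean}}=T_{\mathrm{mean}}+\sum_i T_{\mathrm{mean}}g_i$ is closed under addition and contains $1$, since $1\in T_{\mathrm{mean}}$. It is also closed under multiplication by $T_{\mathrm{mean}}$: for $t,s_0,\dots,s_k\in T_{\mathrm{mean}}$,
\[
t\Bigl(s_0+\sum_i s_ig_i\Bigr)=ts_0+\sum_i(ts_i)g_i,
\]
and $T_{\mathrm{mean}}$ is closed under products. This is routine.

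\emph{Step 2 (containment).} By Lemma~\ref{lem:mean-squares}, every square $h^2$ equals $4M_{2,1}((h,0),(1,1))$ and so lies in $T_{\mathrm{mean}}$. Since $T_{\mathrm{mean}}$ is closed under addition, $\sum A^2\subseteq T_{\mathrm{mean}}$. Consequently $\sum A^2\cdot g_i\subseteq T_{\mathrm{mean}}\cdot g_i$, and hence
\[
\textstyle\sum A^2+\sum_i\sum A^2\cdot g_i\subseteq M_{\mathrm{mean}}.
\]

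\emph{Step 3 (transfer of archimedeanity).} We take the characterization in Marshall's framework: a $T$-module $M$ is archimedean if for every $f\in A$ there is $N\in\naturals$ with $N-f\in M$. Equivalently, using the standard criterion, it suffices that $N-\sum_j X_j^2\in M$ for some $N$. This criterion holds because $T_{\mathrm{mean}}\supseteq\sum A^2$, so the usual proof applies verbatim. The hypothesis supplies such an $N$ with $N-f$ lying in the SOS quadratic module, and by Step~2 the same element lies in $M_{\mathrm{mean}}$.

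The only step needing care is Step~3: the notion of ``archimedean'' for a $T$-module must be the one used in Marshall's representation theorem, with the ring-theoretic condition relative to $T_{\mathrm{mean}}$, and we must make sure it is monotone under enlarging the module. If one uses the ``every $f$'' definition, monotonicity is immediate and no further work is needed. I would adopt that definition to sidestep the issue.
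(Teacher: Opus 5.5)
Your proposal is correct and takes the same route as the paper: Lemma~\ref{lem:mean-squares} gives $\sum A^2\subseteq T_{\mathrm{mean}}$, so the SOS quadratic module, and in particular its archimedean witness $N-\sum X_i^2$, lies inside $M_{\mathrm{mean}}$. Your explicit check of the $T_{\mathrm{mean}}$-module axioms and of monotonicity of the archimedean property under enlarging the module fills in details the paper leaves implicit.
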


\begin{proof}
If the SOS module is archimedean, its generator $N-\sum X_i^2$ belongs to
it.  Since every square lies in $T_{\mathrm{mean}}$
(Lemma~\ref{lem:mean-squares}), the same generator belongs to
$M_{\mathrm{mean}}$ with $T_{\mathrm{mean}}$-coefficients.
\end{proof}

\begin{thm}[Mean Polynomial Positivstellensatz]\label{thm:positiv}
Assume the SOS quadratic module of $S$ is archimedean.  For every $f\in A$
with $f>0$ on $S$, there exist $t_0,\dots,t_k\in T_{\mathrm{mean}}$ such that
\begin{equation}\label{eq:repr}
f=t_0+t_1g_1+\cdots+t_kg_k.
\end{equation}
\end{thm}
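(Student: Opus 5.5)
The simplest route avoids the representation machinery for $T_{\mathrm{mean}}$ altogether. By Lemma~\ref{lem:mean-squares}, every square of a polynomial lies in $T_{\mathrm{mean}}$. Since $T_{\mathrm{mean}}$ is closed under addition, we get $\sum A^2\subseteq T_{\mathrm{mean}}$. Hence the SOS quadratic module
\[
Q_S=\sum A^2+\sum A^2\cdot g_1+\cdots+\sum A^2\cdot g_k
\]
is contained in $M_{\mathrm{mean}}$, with each SOS coefficient reinterpreted as an element of $T_{\mathrm{mean}}$.

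Now suppose $Q_S$ is archimedean and $f>0$ on $S$. Putinar's Positivstellensatz (equivalently, the Jacobi/Marshall representation theorem applied to the preordering-free quadratic module $Q_S$) gives $f=\sigma_0+\sum_i\sigma_i g_i$ with $\sigma_i\in\sum A^2$. Setting $t_i=\sigma_i\in T_{\mathrm{mean}}$ yields \eqref{eq:repr} directly. The only checks needed are the inclusion $\sum A^2\subseteq T_{\mathrm{mean}}$, which is immediate from Lemma~\ref{lem:mean-squares} and additivity, and that Putinar's theorem applies under exactly the stated archimedean hypothesis on $Q_S$, which it does.

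An alternative route follows the paper's architecture. Proposition~\ref{prop:archimedean} shows $M_{\mathrm{mean}}$ is an archimedean $T_{\mathrm{mean}}$-module, and Theorem~\ref{thm:strongly-gen} shows $T_{\mathrm{mean}}$ is strongly generating. One would then invoke Marshall's representation theorem for archimedean modules over a strongly generating preprime: if $f>0$ on the set $K_{M_{\mathrm{mean}}}$ of ring homomorphisms $A\to\reals$ nonnegative on $M_{\mathrm{mean}}$, then $f\in M_{\mathrm{mean}}$. It remains to identify $K_{M_{\mathrm{mean}}}$ with $S$. Point evaluations at $x\in S$ are nonnegative on every generator because each element of $\mathcal{S}$ is PSD and $g_i(x)\ge0$. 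Conversely, any $\reals$-homomorphism is evaluation at some $x\in\reals^n$, and it must satisfy $g_i(x)\ge0$ because $g_i\in M_{\mathrm{mean}}$. So $K_{M_{\mathrm{mean}}}=S$.

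The main obstacle on this second route is verifying Marshall's precise hypotheses. In particular, the module must be archimedean in the sense $N\pm p\in M_{\mathrm{mean}}$ for all $p$, not merely $N-\sum X_i^2\in M_{\mathrm{mean}}$. I would handle this by the first approach: deduce the archimedean property from $Q_S\subseteq M_{\mathrm{mean}}$. Since $Q_S$ is archimedean and contained in $M_{\mathrm{mean}}$, every $N\pm p$ that lies in $Q_S$ also lies in $M_{\mathrm{mean}}$. This makes the containment argument the cleanest complete proof.
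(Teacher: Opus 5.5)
Your first route is correct, and it differs from the paper's. The paper combines Theorem~\ref{thm:strongly-gen} with Proposition~\ref{prop:archimedean}. Theorem~\ref{thm:strongly-gen} writes every $p$ as $\frac{(p+1)^2}{4}-\frac{(p-1)^2}{4}$, so $T_{\mathrm{mean}}-T_{\mathrm{mean}}=A$. The paper then invokes Marshall's representation theorem for archimedean modules over strongly generating preprimes. This gives $kf=1+m$ with $m\in M_{\mathrm{mean}}$, and the paper divides by $k$.

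You use Lemma~\ref{lem:mean-squares} only to get $\sum A^2\subseteq T_{\mathrm{mean}}$, hence $Q_S\subseteq M_{\mathrm{mean}}$. You then apply Putinar's theorem to $Q_S$, whose hypothesis is exactly the one assumed. This is shorter, and it avoids two points the paper leaves implicit:
\begin{enumerate}
\item that $K_{M_{\mathrm{mean}}}=S$;
\item that $M_{\mathrm{mean}}$ is archimedean in the full sense $N\pm p\in M_{\mathrm{mean}}$ for every $p$. Proposition~\ref{prop:archimedean} only exhibits $N-\sum X_i^2$; as you note, the containment $Q_S\subseteq M_{\mathrm{mean}}$ supplies the rest.
\end{enumerate}

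Your route also gives a sharper conclusion. The $t_i$ can be chosen to be sums of squares, i.e.\ sums of the special generators $4M_{2,1}((h,0),(1,1))$. This makes explicit that, under the archimedean hypothesis, the theorem is a direct corollary of Putinar's theorem, and the other mean forms are not needed for the existence of certificates. That is the content of Remark~\ref{rem:minimal}, made immediate.

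The paper's route buys the general preprime framework, which would also cover strongly generating preprimes that do not contain $\sum A^2$. For $T_{\mathrm{mean}}$, which already contains all squares, that generality is not needed. In a general commutative $\reals$-algebra your containment argument works the same way, with Jacobi's representation theorem for archimedean quadratic modules in place of Putinar's.
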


\begin{proof}
By Theorem~\ref{thm:strongly-gen}, $T_{\mathrm{mean}}$ is strongly generating;
by Proposition~\ref{prop:archimedean}, $M_{\mathrm{mean}}$ is archimedean.
Marshall's Theorem~2.3~\cite{Marshall02} then yields
$kf=1+m$ for some $k\ge1$ and $m\in M_{\mathrm{mean}}$.  Writing
$m=t'_0+\sum t'_i g_i$ and dividing by $k$ (which lies in $T_{\mathrm{mean}}$
as a rational nonnegative constant) gives the required representation.
\end{proof}

\subsection{Relation to other cones}

Let $\mathcal{P}_{n,2d}$, $\Sigma_{n,2d}$, $\mathcal{C}_{n,2d}$, and
$\mathcal{M}_{n,2d}$ denote the PSD, SOS, SONC, and mean polynomial cones.
From~\cite{DresslerSchick2025,IdW} we have
\[
\Sigma_{n,2d}\subseteq\mathrm{SOSONC}_{n,2d}\subseteq\mathcal{M}_{n,2d}
\subseteq\mathcal{P}_{n,2d}.
\]

\begin{crl}[Separation]\label{crl:separation}
The Robinson form $\hat{R}$ belongs to $\mathcal{M}_{4,4}$ but not to
$\mathrm{SOSONC}_{4,4}$.  Hence
$\mathcal{M}_{n,2d}\supsetneq\mathrm{SOSONC}_{n,2d}$ for $(n,d)=(4,2)$.
\end{crl}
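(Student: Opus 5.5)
The proof splits into the two memberships. The first is immediate from Proposition~\ref{prop:cl-rob}. There we have $\hat{R}=8M_{1,0}\bigl((\hat{X}^4,\hat{Y}^4,\hat{Z}^4,\hat{W}^4),(1,1,1,1)\bigr)$ with $\hat{X},\dots,\hat{W}$ linear forms, and this generator is PSD on $\reals^4$ because its arguments $\hat{X}^4,\dots$ are nonnegative, so Proposition~\ref{prop:monotonicity} applies. It has degree $4$, so $\hat{R}\in\mathcal{M}_{4,4}$. I will double-check the constant $8$ and the inverse substitution by recomputing \eqref{eq:rhat-q} in reverse: solve $X'=X-W$, $Y'=Y-W$, $Z'=Z-W$, $W'=X+Y+Z-W$ for $(X,Y,Z,W)$, then substitute into $Q$. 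For the strict inclusion I will use the chain $\mathrm{SOSONC}_{n,2d}\subseteq\mathcal{M}_{n,2d}$ stated just above the corollary.

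The real content is $\hat{R}\notin\mathrm{SOSONC}_{4,4}$. Suppose $\hat{R}=s+\sum_j c_j$ with $s\in\Sigma_{4,4}$ and each $c_j$ a nonnegative circuit polynomial. I will argue in three steps.

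\emph{(i) Newton polytopes.} Since every summand is PSD, $\mathrm{New}(s)$ and every $\mathrm{New}(c_j)$ lie in $\mathrm{New}(\hat{R})$. Expanding $\hat{R}$ gives the support $X^4, X^3W, X^2W^2$ (and the same with $Y,Z$), together with $X^2YZ, XY^2Z, XYZ^2, XYZW$. There is no $W^4$ term. So the only even lattice points available as circuit vertices are $X^4,Y^4,Z^4$ and $X^2W^2,Y^2W^2,Z^2W^2$, and the possible inner terms are few.

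\emph{(ii) Zeros.} $\hat{R}$ has many real zeros, for instance $(1,0,0,1)$, $(1,1,0,1)$ and its permutations, and the images of the zero set $\{|X|=|Y|=|Z|=|W|,\ XYZW>0\}$ of $Q$. At each zero every summand must vanish. A nonnegative circuit polynomial vanishes at a point of the torus only on a single orbit determined by its barycentric data, and it vanishes at a boundary point only if that point kills all its vertex monomials. Combining this with (i) should force each $c_j$ with nonzero inner term to be absent, or else to be a binomial square, which is already SOS.

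\emph{(iii) Conclusion.} Then $\hat{R}$ would be SOS, contradicting \eqref{eq:rhat-q} and the fact that $Q\notin\Sigma_{4,4}$.

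Step (ii) is the main obstacle. A full case analysis of admissible circuits inside $\mathrm{New}(\hat{R})$ and their zero sets is the tedious part. If it becomes unwieldy, I will fall back on the cited result~\cite[p.~107]{MSc01}, which establishes exactly that $\hat{R}\notin\mathrm{SOSONC}_{4,4}$, and reduce the corollary to combining that citation with Proposition~\ref{prop:cl-rob}.
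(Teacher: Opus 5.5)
Your membership step is the paper's own argument. The double-check you plan is exactly where it fails: with $Q=X^4+Y^4+Z^4+W^4-4XYZW$ as printed, \eqref{eq:rhat-q} is false. At $(X,Y,Z,W)=(1,0,0,0)$ the left side is $\hat R(1,0,0,1)=0$, while $2Q(1,0,0,0)=2$. The formula of Proposition~\ref{prop:cl-rob} likewise gives $2\neq 0=\hat R(1,0,0,1)$. In fact this $Q$ is SOS (Hurwitz): $Q=(X^2-Y^2)^2+(Z^2-W^2)^2+2(XY-ZW)^2$, so \eqref{eq:rhat-q} would make $\hat R$ SOS.

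Expanding shows the identity does hold for the genuine Choi--Lam form $Q_{\mathrm{CL}}=W^4+X^2Y^2+Y^2Z^2+Z^2X^2-4XYZW$. Inverting it with the paper's hat variables gives $\hat R=2Q_{\mathrm{CL}}(\hat X,\hat Y,\hat Z,\hat W)=8\bigl(\tfrac14(\hat W^4+\hat X^2\hat Y^2+\hat Y^2\hat Z^2+\hat Z^2\hat X^2)-\hat X\hat Y\hat Z\hat W\bigr)$. This is an $M_{1,0}$ form, in the sense of Remark~\ref{rem:sonc-means}, with arguments $(\hat W^4,\hat X^2\hat Y^2,\hat Y^2\hat Z^2,\hat Z^2\hat X^2)$, and it is what you should use for $\hat R\in\mathcal M_{4,4}$. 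For PSD-ness, note that the geometric mean of these arguments is $|\hat X\hat Y\hat Z\hat W|\ge\hat X\hat Y\hat Z\hat W$. For the same reason, step (iii) must not invoke ``$Q\notin\Sigma_{4,4}$''. Invoke instead the classical fact that Robinson's $\hat R$ is not SOS (equivalently $Q_{\mathrm{CL}}\notin\Sigma_{4,4}$).

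For non-membership the paper gives no argument, only a citation. Your Newton-polytope-plus-zeros proof is therefore a genuinely different, self-contained route, and it closes after two repairs.

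\emph{First repair.} The even lattice points of $\mathrm{New}(\hat R)$ also include $X^2Y^2,X^2Z^2,Y^2Z^2$, so there are nine in all.

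\emph{Second repair.} Your boundary rule is wrong as stated: $X^2(X-W)^2$ vanishes at $(1,0,0,1)$ although its vertex monomials do not. The correct dichotomy is as follows. For a zero $p$ with $0/1$ coordinates, $c_j(p)$ is the sum of the coefficients of those terms of $c_j$ supported in $\operatorname{supp}(p)$. Because $\beta$ has positive barycentric coordinates, $\beta$ is so supported iff every vertex is. Hence $c_j(p)=0$ forces either no vertex, or the entire circuit, to be supported in $\operatorname{supp}(p)$.

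\emph{Finishing the argument.} Apply this at $(1,0,0,1)$, $(0,1,0,1)$, $(0,0,1,1)$. Any circuit using $X^4$ or $X^2W^2$ must then be $X^2(aX^2+\mu XW+bW^2)$, and similarly for $Y$ and $Z$; these are SOS. The remaining circuits have vertices among $X^2Y^2,X^2Z^2,Y^2Z^2$. Their triangle has no interior lattice point, so these circuits are $X^2(aY^2+\mu YZ+bZ^2)$ and the like, again SOS. Hence $\hat R\in\Sigma_{4,4}$, a contradiction.

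The torus-orbit remark is unnecessary. All seven zeros of $\hat R$ are the points $(x,y,z,1)$ with $x,y,z\in\{0,1\}$ other than $(1,1,1,1)$, and each lies on a coordinate hyperplane. What your route buys over the citation is a short, verifiable proof that also shows exactly which circuits could occur in an SOSONC decomposition of $\hat R$.
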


\begin{proof}
Proposition~\ref{prop:cl-rob} gives $\hat{R}=8M_{1,0}((\hat{X}^4,\dots,
\hat{W}^4),(1,1,1,1))$ with $\hat{X},\dots,\hat{W}$ linear forms.
Thus $\hat{R}\in T_{\mathrm{mean}}\subseteq\mathcal{M}_{4,4}$.
Non-membership in SOSONC follows from~\cite[p.~31]{MSc01}.
\end{proof}

\begin{rem}\label{rem:sonc-generalisation}
The SONC Positivstellensatz of~\cite{DIdW02} uses the circuit-based preprime
$T_{\mathrm{circ}}$.  Since every nonnegative circuit polynomial is
$M_{1,0}(Y,w)$ (Remark~\ref{rem:sonc-means}), we have
$T_{\mathrm{circ}}\subseteq T_{\mathrm{mean}}$.  The mean polynomial
Positivstellensatz therefore strictly generalises the SONC version: it
provides larger certificate families while retaining the same convergence
guarantees.  Moreover, as Proposition~\ref{prop:cl-rob} shows, means of
linear forms are included---certificates the SONC cone cannot capture.
\end{rem}

\begin{rem}[Scope]\label{rem:scope}
Theorem~\ref{thm:positiv} applies to every compact semialgebraic set whose
SOS quadratic module is archimedean.  By Schm\"udgen's
theorem~\cite{Schm91}, this includes all basic closed semialgebraic sets.
\end{rem}

\begin{rem}[Pullback stability]\label{rem:pullback}
If $T$ is strongly generating in $A$, then for any $\reals$-algebra
homomorphism $\varphi\colon B\to A$, $\varphi^{-1}(T)$ is strongly generating
in~$B$~\cite[Proposition~2.2]{Marshall02}.  Consequently one obtains
Positivstellens\"atze for quotient algebras $\reals[X]/I$, semigroup
algebras, and coordinate rings of affine varieties by pulling back
$T_{\mathrm{mean}}$.  This justifies the quotient-algebra SDP relaxations
implemented in Irene~\cite{GMIr}.
\end{rem}

\begin{rem}[Semigroup algebra extension]\label{rem:cgik}
Curto, Ghasemi, Infusino, and~Kuhlmann~\cite{CGIK} treat the truncated
moment problem for unital commutative $\reals$-algebras on an equal footing
with the classical polynomial case.  The mean polynomial preprime can be
defined analogously in any finitely generated commutative $\reals$-algebra
$A$; the same four-step proof shows $T_{\mathrm{mean}}$ is strongly
generating, yielding a Positivstellensatz for compact subsets of the
character space of~$A$.
\end{rem}

\begin{rem}[Computational decomposition]\label{rem:efficient}
The proof of Theorem~\ref{thm:strongly-gen} exhibits every polynomial as a
difference of two squares, each representable by a single mean form of
depth~$1$.  However, the Kadison--Dubois construction in the
Positivstellensatz yields a single element of $M_{\mathrm{mean}}$, not a
difference; the product depth required is not explicitly bounded.  An
efficient decomposition of $f-\lambda$ into mean polynomial certificates
(analogous to the Gram-matrix method for SOS) remains an open problem.  In
practice, the relaxation can be implemented as a hybrid SDP/GP formulation
within Irene~\cite{GMIr}.
\end{rem}

\section{application to polynomial optimization}\label{sec:optim}

\subsection{The mean polynomial hierarchy}

Consider the constrained problem
\begin{equation}\label{eq:pop}
f^*=\inf_{x\in S}f(x),\qquad
S=\{x\in\reals^n\mid g_1(x)\ge0,\dots,g_k(x)\ge0\},
\end{equation}
with $f,g_i\in\reals[X]$ and $S$ compact.  For each $r\ge0$, define the
$r$th mean polynomial relaxation
\[
f^{\mathrm{mean}}_r=\sup\{\lambda\in\reals\mid f-\lambda\in M_{\mathrm{mean},r}\},
\]
where $M_{\mathrm{mean},r}=T_{\mathrm{mean},r}+\sum_i T_{\mathrm{mean},r}\cdot g_i$,
and $T_{\mathrm{mean},r}$ consists of elements of $T_{\mathrm{mean}}$ of
total degree $\le2r$.

\begin{prop}[Convergence]\label{prop:convergence}
If the quadratic module associated to $S$ is archimedean, then
$\lim_{r\to\infty}f^{\mathrm{mean}}_r=f^*$.
\end{prop}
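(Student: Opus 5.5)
The argument has three parts: monotonicity, an upper bound, and convergence from below via the Positivstellensatz.

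\emph{Monotonicity.} Degree-truncated sets are nested, $T_{\mathrm{mean},r}\subseteq T_{\mathrm{mean},r+1}$, so $M_{\mathrm{mean},r}\subseteq M_{\mathrm{mean},r+1}$. Consequently the sequence $f^{\mathrm{mean}}_r$ is nondecreasing in $r$, where the supremum over an empty set is read as $-\infty$.

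\emph{Upper bound.} Suppose $f-\lambda\in M_{\mathrm{mean},r}$, and write $f-\lambda=t_0+\sum_i t_ig_i$ with $t_i\in T_{\mathrm{mean}}$. Every generator in $\mathcal{S}$ is PSD on $\reals^n$. Sums and products of PSD polynomials are PSD, so every element of $T_{\mathrm{mean}}$ is PSD. On $S$ we have $g_i\ge0$, hence $f-\lambda\ge0$ on $S$, which gives $\lambda\le f^*$. It follows that $f^{\mathrm{mean}}_r\le f^*$ for all $r$. Since $S$ is compact and nonempty, $f^*$ is finite. The limit therefore exists and is at most $f^*$.

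\emph{Convergence.} Fix $\varepsilon>0$. The polynomial $f-f^*+\varepsilon$ satisfies $f-f^*+\varepsilon\ge\varepsilon>0$ on $S$. By Theorem~\ref{thm:positiv}, which uses Proposition~\ref{prop:archimedean} and Theorem~\ref{thm:strongly-gen} under the archimedean hypothesis, there exist $t_0,\dots,t_k\in T_{\mathrm{mean}}$ with $f-(f^*-\varepsilon)=t_0+\sum_i t_ig_i$. This is a finite identity, so we may choose $r$ with $2r\ge\max_i\deg t_i$. Then each $t_i\in T_{\mathrm{mean},r}$, and hence $f^{\mathrm{mean}}_r\ge f^*-\varepsilon$. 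By monotonicity the same bound holds for all larger $r$. Letting $\varepsilon\to0$ gives $\lim_r f^{\mathrm{mean}}_r=f^*$.

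The main obstacle is that the truncation is defined by the degree of the elements $t_i$, not by the degree of a representation of each $t_i$ as a sum of products of generators. An element of degree $\le 2r$ might only be expressible using generators of higher degree that cancel. As written, the definition of $T_{\mathrm{mean},r}$ is by the total degree of the element itself, and the argument above only needs that each $t_i$ is an element of $T_{\mathrm{mean}}$ with finite degree, so this causes no difficulty. If one instead wants the truncation to bound the degrees of the generators used, the fallback is to note that the expression for each $t_i$ produced by Theorem~\ref{thm:positiv} is a finite combination of finitely many generators. Taking $r$ at least half of the maximal degree of all terms in that expression then gives the same conclusion.

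A minor point is the hypothesis "the quadratic module associated to $S$". I read it as the SOS module, so that Theorem~\ref{thm:positiv} applies. If instead it means $M_{\mathrm{mean}}$ itself, then that module is archimedean directly, and Marshall's theorem applies without passing through Proposition~\ref{prop:archimedean}.
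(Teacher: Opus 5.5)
Your proposal is correct and follows the paper's own argument: apply Theorem~\ref{thm:positiv} to $f-(f^*-\varepsilon)$, choose $r$ large enough to cover the degrees of the finitely many $t_i$, and combine this with the bound $f^{\mathrm{mean}}_r\le f^*$. Your nestedness observation makes explicit what the paper compresses into ``for sufficiently large $r$'', and your argument that every element of $T_{\mathrm{mean}}$ is PSD makes explicit what it calls ``trivially''.
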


\begin{proof}
Fix $\varepsilon>0$.  By Theorem~\ref{thm:positiv},
$f-(f^*-\varepsilon)\in M_{\mathrm{mean}}$, hence
$f-(f^*-\varepsilon)\in M_{\mathrm{mean},r}$ for sufficiently large $r$.
Thus $f^{\mathrm{mean}}_r\ge f^*-\varepsilon$ for large $r$, while
$f^{\mathrm{mean}}_r\le f^*$ trivially.
\end{proof}

\subsection{Tractability}

Every mean form decomposes as a difference of two posynomials:
\begin{equation}\label{eq:mean-posynomial}
M_{q,p}(L,w)=
\underbrace{\Bigl(\frac{\sum w_iL_i^{q}}{\sum w_i}\Bigr)^{\!\operatorname{lcm}(q,p)/q}}_{P_{\mathrm{upper}}}
-\underbrace{\Bigl(\frac{\sum w_iL_i^{p}}{\sum w_i}\Bigr)^{\!\operatorname{lcm}(p,q)/p}}_{P_{\mathrm{lower}}}.
\end{equation}
When each $L_i$ is a monomial or a linear form with nonnegative
coefficients, $L_i^{q}$ is a posynomial, and a sum of single mean forms
$f-\lambda=\sum_\alpha c_\alpha M_{q_\alpha,p_\alpha}$ is a signomial
equality---solvable by sequential geometric programming, exactly the
mechanism used in Irene's \texttt{SONCRelaxations.solve()}~\cite{GMIr}.

Table~\ref{tab:tractability} summarises the principal certificate families.

\begin{table}[htbp]
\centering
\caption{Tractability of certificate hierarchies}
\label{tab:tractability}
\begin{tabular}{@{}llll@{}}
\hline
Hierarchy & Certificate form & Method & Convergent?\\
\hline
SOS (Lasserre)~\cite{JBL01} & $\sum A^2$ & SDP (moment) & Yes (archim.)\\
SONC~\cite{DIdW01} & $M_{1,0}$ (circuit) & GP / Signomial & Partial\\
SOS$+$SONC~\cite{DresslerSchick2025} & SOS $\cup$ SONC & SDP $+$ GP & Empirical\\
\hline
Mean ($d=1$) & $\sum M_{q,p}$ & Signomial prog. & Yes ($r\to\infty$)\\
Mean ($d=2$) & $\sum M_{q_1,p_1}M_{q_2,p_2}$ & Expand + Signom. & Yes ($r\to\infty$)\\
Mean (full) & $\sum\prod_k M_{q_k,p_k}$ & SDP/GP hybrid & Yes (archim.)\\
\hline
\end{tabular}
\end{table}

The depth-$1$ hierarchy generalises SONC (every circuit polynomial is
$M_{1,0}$) and, via Lemma~\ref{lem:mean-squares}, captures all SOS
certificates at depth~$1$ in the theoretical hierarchy.  For signomial
formulations, the argument $h$ in $h^2=4M_{2,1}$ must be a posynomial,
restricting computational capture to quadratic SOS forms; higher-degree SOS
require SDP-based methods.

The SOS and SONC cones are incomparable in general, but both sit inside
$\mathrm{SOSONC}=\Sigma+\mathcal{C}$, which itself lies inside
$T_{\mathrm{mean}}$.  The full inclusion chain is
\begin{equation}\label{eq:hierarchy-chain}
\max\{f^{\mathrm{SOS}}_r,\;f^{\mathrm{SONC}}_r\}
\le f^{\mathrm{SOSONC}}_r
\le f^{\mathrm{mean},(d_r)}_r
\le f^{\mathrm{mean}}_r
\le f^*,
\end{equation}
where $d_r$ is the product depth of the single-element certificate; whether
$d_r=1$ always suffices is open.

\subsection{Hierarchies via product-depth truncation}\label{sec:depth-trunc}

To make the hierarchy computationally actionable, we truncate by product
depth.  Define
\[
T_{\mathrm{mean},r}^{(d)}=\Bigl\{\sum_j c_j\prod_{k=1}^{d_j}
M_{q_{jk},p_{jk}}(L_{jk},w_{jk})
\;\Big|\;c_j\ge0,\;d_j\le d,\;\deg(\cdots)\le2r\Bigr\},
\]
and the corresponding lower bound
\[
f^{\mathrm{mean},(d)}_r
=\sup\{\lambda\in\reals\mid f-\lambda\in M_{\mathrm{mean},r}^{(d)}\},
\]
where $M_{\mathrm{mean},r}^{(d)}=T_{\mathrm{mean},r}^{(d)}+\sum_i T_{\mathrm{mean},r}^{(d)}\cdot g_i$.

At depth $d=1$, each certificate term is a single mean form; the feasibility
condition is a signomial equality solvable by sequential GP.  At depth
$d=2$, a product $M_{q_1,p_1}M_{q_2,p_2}$ expands to $2^2=4$ posynomial
terms with alternating signs.  For depth $d$, the expansion yields $2^d$
terms; the alternating signs are a consequence of the depth, not the degree,
and persist at every relaxation order.  The standard remedy is signomial
programming: replace each $P_{\mathrm{lower}}$ by a monomial approximation
at the current iterate (via the AM--GM inequality), solve the resulting
convex GP, and iterate.

Convergence of the depth-truncated scheme follows from
Theorem~\ref{thm:strongly-gen}: for any fixed $r$, there exists finite $d$
such that $f-\lambda\in M_{\mathrm{mean},r}^{(d)}$, and
\[
\lim_{d\to\infty}f^{\mathrm{mean},(d)}_r=f^{\mathrm{mean}}_r,
\qquad
\lim_{r\to\infty}\lim_{d\to\infty}f^{\mathrm{mean},(d)}_r=f^*.
\]

\begin{rem}
The depth-$1$ hierarchy is implemented in
Irene~\cite{GMIr} (\texttt{SONCRelaxations.solve()}).  Systematic
benchmarking against the Lasserre test suite is a priority for future work.
\end{rem}

\subsection{Future directions}

\begin{enumerate}
  \item \textit{Degree bounds.}  Can one obtain an explicit degree bound for
  the representation $f-\lambda=t_0+\sum t_ig_i$ in terms of $\deg(f)$,
  $n$, and the description of $S$?  A polynomial bound (analogous to
  Putinar's degree bounds for SOS) would make the hierarchy practical at
  low~$r$.

  \item \textit{Optimal $(q,p)$ selection.}  For a given problem, which
  mean orders $(q,p)$ yield the tightest bound at a given depth?  A greedy
  selection strategy based on Newton-polytope geometry is a natural
  candidate.

  \item \textit{Sparse structures.}  How does sparsity in the support of
  $f$ and $g_i$ translate into sparsity in mean polynomial certificates?
  The support-sensitive behaviour in Section~\ref{sec:M2dp-sos} suggests
  that sparsity is governed by exponent geometry.

  \item \textit{Computational experiments.}  Systematic benchmarking of the
  mean polynomial hierarchy against SOS and SONC on standard test suites
  is needed to quantify the practical advantage predicted by the chain
  \eqref{eq:hierarchy-chain}.

  \item \textit{Hybrid SDP/GP formulations.}  Warm-start strategies
  alternating between SDP relaxations (for the quadratic envelope) and GP
  relaxations (for the circuit envelope) may yield tighter bounds at lower
  cost.
\end{enumerate}


\section*{Acknowledgements}
The authors gratefully acknowledge the support of the Banff International
Research Station (BIRS:25rit035) and Defence Research and Development Canada (DRDC:CMN1-008).

\end{document}